\newcommand{\refe}[1]{(\ref{#1})}
\numberwithin{equation}{section} 
\theoremstyle{plain}% Theorem-like structures provided by amsthm.sty
\newtheorem{theorem}{Theorem}[section]
\theoremstyle{definition}
\theoremstyle{remark}
\newtheorem{remark}{Remark}
\begin{document}

%%%% Journal Template for Integral Transform and Special Functions %%%%

\articletype{RESEARCH ARTICLE}% Specify the article type or omit as appropriate

\title{Interlacing of zeros of Laguerre polynomials of equal and consecutive degree}
\author{\name{J. Arves\'u\textsuperscript{a}\thanks{The research of J. Arves\'u was funded by Agencia Estatal de Investigaci\'on of Spain, 
grant number PGC-2018-096504-B-C33}, K. Driver\textsuperscript{b},\thanks {The research of K. Driver was funded by the National Research Foundation of South Africa, Grant Number 115332}  and L.  Littlejohn\textsuperscript{c}}
\affil{\textsuperscript{a}Department of Mathematics, Universidad Carlos III de Madrid, 
Avda. de la Universidad, 30, 28911, Legan\'es, Spain; \textsuperscript{b}Department of Mathematics and Applied Mathematics, University of Cape Town, Cape Town 7708, South Africa; \textsuperscript{c}Department of Mathematics, Baylor University, One Bear Place 97328
Waco, TX 76798-7328, USA.}
}

\maketitle

\begin{abstract} We investigate  interlacing properties of zeros of Laguerre polynomials $ L_{n}^{(\alpha)}(x)$  and $ L_{n+1}^{(\alpha +k)}(x),$   $ \alpha > -1, $  where $ n \in \mathbb{N}$ and  $ k \in {\{ 1,2 }\}$.  We prove that,  in general, the zeros of these polynomials interlace partially and not fully. The sharp $t-$interval within which the zeros of two equal degree Laguerre polynomials $ L_n^{(\alpha)}(x)$  and  $ L_n^{(\alpha +t)}(x)$ are interlacing  for every $n \in \mathbb{N}$  and each  $ \alpha > -1$  is  $ 0 < t \leq 2,$ \cite{DrMu2}, and the sharp $t-$interval within which the zeros of two consecutive degree Laguerre polynomials $ L_n^{(\alpha)}(x)$  and  $ L_{n-1}^{(\alpha +t)}(x)$ are interlacing  for every $n \in \mathbb{N}$  and each  $ \alpha > -1$  is  $ 0 \leq t \leq 2,$ \cite{DrMu1}.  We derive conditions on $n \in \mathbb{N}$  and  $\alpha,$  $ \alpha > -1$   that determine the partial or full  interlacing  of  the  zeros of  $ L_n^{(\alpha)}(x)$  and the zeros of  $ L_n^{(\alpha + 2 + k)}(x),$  $ k \in {\{ 1,2 }\}$.  We also prove that partial interlacing holds  between the  zeros of  $ L_n^{(\alpha)}(x)$  and  $ L_{n-1}^{(\alpha + 2 +k )}(x)$ when $ k \in {\{ 1,2 }\},$  $n \in \mathbb{N}$  and  $ \alpha > -1$.    Numerical illustrations of interlacing and its breakdown are provided.  
\end{abstract}

\begin{keywords}
Laguerre polynomials; Zeros; Interlacing; Three-term recurrence relation
\end{keywords}

\section{Introduction} 

The zeros of two  polynomials of consecutive degree  in any orthogonal sequence $\{p_{n}(x)\} _{n=0}^\infty,$ where $p_n(x) $ has exact degree $n,$  satisfy the interlacing property that exactly one zero of $p_{n-1}(x)$ lies strictly between each pair of successive zeros of $p_n(x)$  for each $n \in \mathbb{N}, n \geq 2$.  This classical result goes back almost 150 years to Chebyshev, Markov and Stieltjes and one of its first applications was to Gauss quadrature, \cite [Ch.3]{Sze}.  Subsequently, Lubinsky \cite{Lub} generalized a quadrature formula proved by Simon \cite{Sim} for orthogonal polynomials $P$ of degree $ \leq n-2$ by using the interlacing properties of zeros of two real polynomials $R$ and $S$ of degree $n-1$ and $n$ respectively. He observed that other analytical methods can be used to prove the quadrature results obtained but that the proofs are substantially simplified when interlacing properties of zeros are used. In other applications,  interlacing properties of the zeros of Jacobi polynomials were  used to develop new methods for approximating the finite Hilbert transform \cite{Mas} while the interlacing property of zeros gives rise to stability tests for linear difference forms \cite{Loc}.  

\medskip

Richard Askey \cite{Ask} considered the  question of whether the zeros of  the two equal degree Jacobi  polynomials  $P_n^{(\alpha,\beta)}(x)$  and  $P_n^{(\alpha,\beta +1)}(x)$ have interlacing zeros in the sense that each open interval with endpoints at a pair of consecutive zeros of  $P_n^{(\alpha,\beta)}(x)$  contains exactly one zero of  $P_n^{(\alpha,\beta +1)}(x)$ and conversely. He proved that the zeros  of  $P_n^{(\alpha,\beta)}(x)$  and  $P_n^{(\alpha,\beta +1)}(x)$ are  interlacing for each $n \in \mathbb{N},$  $ \alpha > -1, \beta > -1,$   and  he  conjectured, based on graphical evidence, that  the zeros of  the Jacobi polynomials $P_n^{(\alpha,\beta)}(x)$ and  $P_n^{(\alpha,\beta +2)}(x)$   are  interlacing for each $n \in \mathbb{N}$, $ \alpha > -1, \beta > -1$.   Following Askey's result  and his conjecture in \cite{Ask}, a number of interlacing results for zeros of  parameter-dependent orthogonal polynomials of the same degree, and of different degree, corresponding to  different parameters, can be found in  \cite {DIR}, \cite {DrJo}, \cite{DrJoMb}, \cite{DrMu1}, \cite{DrMu2}.

\section {The zeros of ${L_{n}^{(\alpha)}(x)}$ and  ${L_{n+1}^{(\alpha +t)}(x)}$, $ t \in {\{ 1,2 }\},$  $n \in \mathbb{N},$  $ \alpha > -1$}

For each $ \alpha > -1,$ the sequence of Laguerre polynomials $\{L_{n}^{(\alpha)}(x)\} _{n=0}^\infty $ is  orthogonal on the positive real line with respect to the weight function $e^{-x} x^{\alpha}$.   For each $n \in \mathbb{N}, n \geq 1, \alpha > -1,$ the polynomial $ L_n^{(\alpha)}(x)$  has $n$ real, simple, positive zeros and the zeros of $ L_n^{(\alpha)}(x)$  and $ L_{n-1}^{(\alpha)}(x)$  are interlacing.   It was  proved \cite{DrMu1} that the zeros of $ L_{n}^{(\alpha)}(x)$  and $ L_{n-k}^{(\alpha +t)}(x),$   $ n \in \mathbb{N},  k \in {\{ 1,2,\dots, n-1}\},$ are interlacing for $ 0  \leq t \leq 2k,$ apart from common zeros, and  in the Stieltjes sense \cite{DrJo1}  for $k \geq 2,$  namely,  that each distinct zero of $ L_{n-k}^{(\alpha +t)}(x)$   lies strictly between a  pair of consecutive zeros of $ L_{n}^{(\alpha)}(x),$  excluding common zeros of $ L_{n-k}^{(\alpha +t)}(x)$  and   $ L_{n}^{(\alpha)}(x),$  when $ k \in {\{ 2,\dots, n-1}\}$. Further,  the $t-$ interval  $ 0 \leq t \leq 2k$  is sharp in order for interlacing to hold for every $n \in \mathbb{N},  k \in {\{ 1,2,\dots, n-1}\},$  excluding situations in which $ L_{n-k}^{(\alpha +t)}(x)$  and   $ L_{n}^{(\alpha)}(x)$  have any common zeros. The sharp interlacing $t-$ interval  for interlacing of zeros of equal degree Laguerre polynomials $ L_n^{(\alpha)}(x)$  and  $ L_n^{(\alpha +t)}(x)$  is  $ 0 <  t \leq 2$, \cite {DrMu2}.  At OPSFA 15,  Alan Sokal asked what is known about the interlacing of the zeros of the  Laguerre polynomials $L_n^{(\alpha)}(x)$  and  $ L_{n+k}^{(\alpha +t)}(x)$ where $t >0$ and $k \in \mathbb{N}.$   The simplest case to consider  is $t=1$ and $k=1$  and our first result considers interlacing of the zeros of  $L_n^{(\alpha)}(x)$  and  $ L_{n+1}^{(\alpha +1)}(x)$.

\begin{theorem}\label{Theorem 2.1.} Suppose $\{L_{n}^{(\alpha)}(x)\} _{n=0}^\infty $ is  a sequence of Laguerre polynomials with $ \alpha > -1$. Assume that $\alpha$ and $n \in \mathbb{N}$ are such that $ L_{n+1}^{(\alpha +1)}(x)$ and $ L_{n}^{(\alpha)}(x)$ have no common zeros.  If  $\{z_{i}\} _{i=1}^{n+1} $ are the zeros of  $ L_{n+1}^{(\alpha +1)}(x)$ in increasing order, then each of the $n+1$ intervals $(0, z_1), (z_1, z_2), \dots, (z_n, z_{n+1})$ contains either the point $n +1$ or exactly one (simple) zero of $L_n^{(\alpha)}(x)$ but not both.  The zeros of $L_n^{(\alpha)}(x)$  and  $ L_{n+1}^{(\alpha +1)}(x),$ $\alpha > -1$  are  interlacing  if and only if the smallest  positive zero $z_1$  of  $ L_{n+1}^{(\alpha +1)}(x)$ satisfies  $z_1 > n +1$. 
\end{theorem}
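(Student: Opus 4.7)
The plan is to pin down the sign of $L_n^{(\alpha)}(z_i)$ at each zero $z_i$ of $L_{n+1}^{(\alpha+1)}$ and then use sign changes to count the zeros of $L_n^{(\alpha)}$ inside each of the $n+1$ intervals $(0,z_1), (z_1,z_2), \dots, (z_n,z_{n+1})$. First I would combine the classical identity $L_n^{(\alpha)}(x) = L_n^{(\alpha+1)}(x) - L_{n-1}^{(\alpha+1)}(x)$ with the three-term recurrence
\[
(n+1)\,L_{n+1}^{(\alpha+1)}(x) = (2n+\alpha+2-x)\,L_n^{(\alpha+1)}(x) - (n+\alpha+1)\,L_{n-1}^{(\alpha+1)}(x).
\]
Evaluating the recurrence at $x = z_i$ solves for $L_{n-1}^{(\alpha+1)}(z_i)$ and, after substitution into the first identity, produces the key mixed identity
\[
L_n^{(\alpha)}(z_i) \;=\; \frac{z_i - (n+1)}{n+\alpha+1}\, L_n^{(\alpha+1)}(z_i), \qquad i = 1,\dots,n+1.
\]

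Next, the classical consecutive-degree interlacing of $L_n^{(\alpha+1)}$ and $L_{n+1}^{(\alpha+1)}$ guarantees that $L_n^{(\alpha+1)}(z_i) \ne 0$ and that $L_n^{(\alpha+1)}(z_1), \dots, L_n^{(\alpha+1)}(z_{n+1})$ strictly alternates in sign. Combined with the mixed identity, this determines the sign of $L_n^{(\alpha)}(z_i)$ entirely in terms of that alternation and the sign of $z_i - (n+1)$. I would then examine each interval $(z_i, z_{i+1})$: if $z_i$ and $z_{i+1}$ lie on the same side of $n+1$, the two endpoint values of $L_n^{(\alpha)}$ have opposite signs (odd number of zeros in the interval); if $n+1$ lies strictly between them, the two sign flips cancel and the endpoint values agree (even number of zeros). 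For the leftmost interval $(0, z_1)$, I would use $L_n^{(\alpha)}(0) = \binom{n+\alpha}{n} > 0$ together with the identity at $z_1$ to draw the same dichotomy, depending on whether $n+1 < z_1$ or $n+1 > z_1$.

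Since $L_n^{(\alpha)}$ has exactly $n$ simple positive zeros, the $n+1$ interval counts must sum to $n$. A short arithmetic argument then forces exactly one interval to contain $n+1$ (and hence no zero of $L_n^{(\alpha)}$), with each of the remaining $n$ intervals containing exactly one zero: if no interval contained $n+1$ (as would happen were $n+1 > z_{n+1}$), every interval would have odd count and the total would be at least $n+1$; if the interval containing $n+1$ held two zeros, the other $n$ intervals, each with odd count, would contribute at least $n$, again too many. This establishes the interval-by-interval claim. Full interlacing of the zeros of $L_n^{(\alpha)}$ and $L_{n+1}^{(\alpha+1)}$ is precisely the requirement that each of the $n$ intervals $(z_i, z_{i+1})$, $i = 1, \dots, n$, contain a zero of $L_n^{(\alpha)}$; by the previous step this occurs if and only if the unique zero-free interval is $(0, z_1)$, equivalently $z_1 > n+1$.

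The conceptually pivotal step is the derivation of the mixed identity; once in hand, the rest is bookkeeping with signs. The main issues to watch are the possibility of two zeros in the interval containing $n+1$ (ruled out by the minimum-count lower bound on the other intervals) and using the hypothesis of no common zeros precisely to ensure $L_n^{(\alpha)}(z_i) \ne 0$ for every $i$, so that the sign analysis at each $z_i$ is unambiguous.
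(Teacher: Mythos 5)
Your proof is correct and follows essentially the same route as the paper: your mixed identity is the paper's relation $xL_{n+1}^{(\alpha+1)}(x)=(x-(n+1))L_{n+1}^{(\alpha)}(x)+(\alpha+n+1)L_{n}^{(\alpha)}(x)$ evaluated at the $z_i$ (the two are identical there because $L_{n+1}^{(\alpha)}(z_i)=-L_{n}^{(\alpha+1)}(z_i)$), and the sign-alternation input and interval-by-interval sign counting are the same. Your closing parity-plus-total-count argument, which also handles $(0,z_1)$ and rules out two zeros sharing an interval, is a slightly cleaner way of pinning down the exceptional interval than the paper's appeal to the interlacing chains \refe{(2.2)} and \refe{(2.3)}, but it is a refinement rather than a different method.
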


\begin{proof}  Let

\medskip

  $  0 < x_1 < x_2 < \dots < x_n $ denote  the zeros of  $L_n^{(\alpha)}(x);$  

\medskip

   $ 0 < y_1 < y_2 < \dots < y_n < y_{n+1}$  denote the zeros of $ L_{n+1}^{(\alpha)}(x);$  

\medskip

   $ 0 < z_1 < z_2 < \dots < z_n < z_{n+1}$ denote the zeros of $ L_{n +1}^{(\alpha +1)}(x)$.  

\medskip

From  \cite [(1)] {DrJo} with $n$ replaced by $n+1$:

\begin{equation}\label{(2.1)}  x L_{n+1}^{(\alpha +1)}(x) = (x - (n+1))  L_{n+1}^{(\alpha)}(x) + (\alpha + n +1)  L_n^{(\alpha)}(x).   \end{equation} 

It is known from \cite [Th. 2.3] {DrJo} that the zeros of  the two equal degree Laguerre polynomials $ L_{n+1}^{(\alpha)}(x)$  and  $ L_{n+1}^{(\alpha +1)}(x)$  are interlacing.  Further, Markov's monotonicity theorem shows that the zeros of $ L_{n}^{(\alpha +t)}(x)$ are increasing functions of the parameter $t$. Therefore, for each $n \in \mathbb{N},$

\begin{eqnarray}\label{(2.2)}   0 < y_1 < z_1 < y_2 < z_2 < \dots < y_{n+1} <  z_{n+1}. \end{eqnarray} 

Also, the zeros of  $ L_{n}^{(\alpha)}(x)$  and  $ L_{n+1}^{(\alpha)}(x)$  are interlacing for $\alpha > -1$  so that

\begin{eqnarray}\label{(2.3)}   0 < y_1 < x_1 < y_2 < x_2 < \dots < y_{n} <  x_{n} < y_{n+1}. \end{eqnarray}

\medskip

We are investigating  interlacing of the zeros of  $L_n^{(\alpha)}(x)$  and  $ L_{n+1}^{(\alpha +1)}(x)$   so we look for inequalities satisfied by  the zeros $x_i,$   $ i \in {\{ 1,2,\dots, n}\}$  of  $L_n^{(\alpha)}(x)$  and the zeros $z_i,$  $ i \in {\{ 1,2,\dots, n+1}\}$ of  $ L_{n+1}^{(\alpha +1)}(x)$. 

\medskip

Evaluating \refe{(2.1)}  at successive zeros $z_k$ and $z_{k+1} $   of  $ L_{n+1}^{(\alpha +1)}(x),$  we have for each $k \in {\{ 1,2,\dots, n}\}$

\begin{equation*}  (z_k - (n+1))  L_{n+1}^{(\alpha)}(z_k) =  - (\alpha + n +1)  L_n^{(\alpha)}(z_k)  \end{equation*} 

and 

\begin{equation*}  (z_{k+1} - (n+1))  L_{n+1}^{(\alpha)}(z_{k+1}) =  - (\alpha + n +1)  L_n^{(\alpha)}(z_{k+1}),   \end{equation*} 

and therefore, for each $k = {1,2,\dots, n},$

\begin{equation}\label{(2.4)}  [z_k - (n+1)]  [z_{k+1} - (n+1)] L_{n+1}^{(\alpha)}(z_k) L_{n+1}^{(\alpha)}(z_{k+1}) =   (\alpha + n +1)^2   L_n^{(\alpha)}(z_k)  L_n^{(\alpha)}(z_{k+1}).  \end{equation} 

The product  $ L_{n+1}^{(\alpha)}(z_k)  L_{n+1}^{(\alpha)}(z_{k+1}) < 0 $ for each $k = {1,2,\dots, n}$ since the zeros of  the two equal degree Laguerre polynomials $ L_{n+1}^{(\alpha)}(x)$  and  $ L_{n+1}^{(\alpha +1)}(x)$  are interlacing, \cite[Th. 2.3] {DrJo}. Also, $(\alpha + n +1)^2 >0,$  so  the right hand side  of \refe{(2.4)} is $<0$ unless $n +1 \in (z_k, z_{k+1})$ which can occur for at most one value of $k$.  

\medskip 

It follows that $L_n^{(\alpha)}(x)$ has at least $n-1$ sign changes between $z_1$ and $z_{n+1}$ and each of the $n$ intervals $(z_k, z_{k+1}),$  $k = {1,2,\dots, n},$   contains either (i) exactly one zero of  $L_n^{(\alpha)}(x)$ and not the point $n+1$ or (ii) the point $n+1$ and  no zero of  $L_n^{(\alpha)}(x)$.  If (i) holds, we have interlacing of the zeros of  $L_n^{(\alpha)}(x)$  and $L_{n+1}^{(\alpha +1)}(x)$.  If (ii) holds, $n-1$ zeros of  $L_n^{(\alpha)}(x),$ together with the point $x= n+1,$ interlace with the $n+1$ zeros of  $L_{n+1}^{(\alpha +1)}(x)$.  The  remaining zero of $L_n^{(\alpha)}(x)$   cannot lie in any of the intervals $(z_k, z_{k+1})$  that already contains  a  zero of $L_n^{(\alpha)}(x)$ nor can it lie in the interval $(z_k, z_{k+1})$ that contains the point $n+1,$ since that would contradict the number of sign changes in each interval $(z_k, z_{k+1}),$ $ k \in {\{ 1,2,\dots, n}\}$.  From \refe{(2.2)} and \refe{(2.3)},  $x_n < y_{n+1} <  z_{n+1}$  so the remaining  zero of  $ L_{n}^{(\alpha)}(x)$  must be less  than $z_1$.  With reference to equations \refe{(2.2)} and \refe{(2.3)}, we have, in case (ii), 

\begin{eqnarray*}\label{(2.5)}   0 <y_1 <  x_1 < z_1 < y_2 <  \dots   \end{eqnarray*}

This completes the proof of Theorem \ref{Theorem 2.1.}.
\end{proof}

\begin{remark} The "extra point", namely $n+1$, which "completes" the interlacing of the zeros  of  $ L_{n+1}^{(\alpha +1)}(x)$  and  $L_n^{(\alpha)}(x),$  $\alpha > -1,$ depends only on  $n\in \mathbb{N}$ and is independent of $\alpha$.
\end{remark}

Table 1 and Table 2 list, respectively, the zeros of $L_{n+1}^{(\alpha +1)}(x)$ and $L_{n}^{(\alpha)}(x)$ for $\alpha \in \{0,1,\dots,19\}$ and $n = 7$. The numerical evidence confirms the results proved in Theorem \ref{Theorem 2.1.}. The boxed points highlight the interval with endpoints at successive zeros of $L_{n+1}^{(\alpha +1)}(x)$  and  $L_{n}^{(\alpha)}(x)$ respectively that contain the point $x = n+1 =8$.

\begin{table}[!htbp]%[H]
\caption{The zeros of $ L_{n+1}^{(\alpha +1)}(x)$ for $\alpha \in\{0,1,\dots,19\}$ and $n = 7$.}
$$
\begin{array}{lllllllll}
L^{(\alpha+1)}_{n+1}(x) & z_1  & z_2  & z_3  & z_4  & z_5  & z_6  & z_7  & z_8 
\\
\hline\hline
\\
L^{1}_{8}(x) & 0.409  & 1.38  & 2.96  & \fbox{5.18}  & \fbox{8.16}  & 12.1  & 17.2  & 24.6 
   \\
L^{2}_{8}(x) & 0.699  & 1.90  & 3.68  & \fbox{6.10}  & \fbox{9.27}  & 13.4  & 18.7  & 26.3 
   \\
L^{3}_{8}(x) & 1.03  & 2.44  & 4.41  & \fbox{7.02}  & \fbox{10.4}  & 14.6  & 20.2  & 27.9 
   \\
L^{4}_{8}(x) & 1.39  & 3.00  & 5.16  & \fbox{7.94}  & \fbox{11.5}  & 15.9  & 21.6  & 29.5 
   \\
L^{5}_{8}(x) & 1.79  & 3.59  & \fbox{5.92}  & \fbox{8.87}  & 12.5  & 17.1  & 23.0  & 31.1 
   \\
L^{6}_{8}(x) & 2.21  & 4.19  & \fbox{6.69}  & \fbox{9.79}  & 13.6  & 18.4  & 24.4  & 32.7 
   \\
L^{7}_{8}(x) & 2.65  & 4.81  & \fbox{7.47}  & \fbox{10.7}  & 14.7  & 19.6  & 25.8  & 34.2 
   \\
L^{8}_{8}(x) & 3.11  & \fbox{5.44}  & \fbox{8.25}  & 11.7  & 15.8  & 20.8  & 27.2  & 35.8 
   \\
L^{9}_{8}(x) & 3.59  & \fbox{6.08}  & \fbox{9.04}  & 12.6  & 16.9  & 22.0  & 28.5  & 37.3 
   \\
L^{10}_{8}(x) & 4.08  & \fbox{6.73}  & \fbox{9.84}  & 13.5  & 17.9  & 23.2  & 29.9  & 38.8 
   \\
L^{11}_{8}(x) & 4.59  & \fbox{7.40}  & \fbox{10.6}  & 14.5  & 19.0  & 24.4  & 31.2  & 40.3 
   \\
L^{12}_{8}(x) & \fbox{5.11}  & \fbox{8.07}  & 11.5  & 15.4  & 20.1  & 25.6  & 32.5  & 41.8 
   \\
L^{13}_{8}(x) & \fbox{5.64}  & \fbox{8.75}  & 12.3  & 16.3  & 21.1  & 26.8  & 33.8  & 43.2 
   \\
L^{14}_{8}(x) & \fbox{6.19}  & \fbox{9.44}  & 13.1  & 17.3  & 22.2  & 28.0  & 35.1  & 44.7 
   \\
L^{15}_{8}(x) & \fbox{6.75}  & \fbox{10.1}  & 13.9  & 18.2  & 23.2  & 29.2  & 36.5  & 46.1 
   \\
L^{16}_{8}(x) & \fbox{7.31}  & \fbox{10.8}  & 14.7  & 19.2  & 24.3  & 30.4  & 37.7  & 47.5 
   \\
L^{17}_{8}(x) & \fbox{7.89}  & \fbox{11.6}  & 15.6  & 20.1  & 25.4  & 31.5  & 39.0  & 49.0 
   \\
L^{18}_{8}(x) & 8.47  & 12.3  & 16.4  & 21.1  & 26.4  & 32.7  & 40.3  & 50.4 
   \\
L^{19}_{8}(x) & 9.07  & 13.0  & 17.2  & 22.0  & 27.5  & 33.9  & 41.6  & 51.8 
   \\
L^{20}_{8}(x) & 9.67  & 13.7  & 18.1  & 23.0  & 28.5  & 35.0  & 42.9  & 53.2 
\end{array}
$$
\label{Laguerre-1}
\end{table}

%\medskip

\begin{table}[!htbp]%[H]
\caption{The zeros of $L_{n}^{(\alpha)}(x)$ for $\alpha \in\{0,1,\dots,19\}$ and $n = 7$.}
$$
\begin{array}{llllllll}
L^{(\alpha)}_{n}(x) & x_1  & x_2  & x_3  & x_4  & x_5  & x_6  & x_7 
\\
\hline\hline
\\
L^{0}_{7}(x) & 0.193  & 1.03  & 2.57  & \fbox{4.90}  & \fbox{8.18}  & 12.7  & 19.4  \\
L^{1}_{7}(x) & 0.461  & 1.56  & 3.35  & \fbox{5.92}  & \fbox{9.42}  & 14.2  & 21.1  \\
L^{2}_{7}(x) & 0.783  & 2.13  & 4.15  & \fbox{6.93}  & \fbox{10.6}  & 15.6  & 22.7  \\
L^{3}_{7}(x) & 1.15  & 2.73  & 4.96  & \fbox{7.94}  & \fbox{11.8}  & 17.0  & 24.4  \\
L^{4}_{7}(x) & 1.55  & 3.34  & \fbox{5.77}  & \fbox{8.94}  & 13.0  & 18.4  & 25.9  \\
L^{5}_{7}(x) & 1.98  & 3.98  & \fbox{6.60}  & \fbox{9.95}  & 14.2  & 19.8  & 27.5  \\
L^{6}_{7}(x) & 2.43  & 4.63  & \fbox{7.43}  & \fbox{11.0}  & 15.4  & 21.1  & 29.0  \\
L^{7}_{7}(x) & 2.91  & \fbox{5.30}  & \fbox{8.27}  & 12.0  & 16.6  & 22.4  & 30.6  \\
L^{8}_{7}(x) & 3.40  & \fbox{5.98}  & \fbox{9.11}  & 13.0  & 17.7  & 23.8  & 32.1  \\
L^{9}_{7}(x) & 3.92  & \fbox{6.67}  & \fbox{9.96}  & 14.0  & 18.9  & 25.1  & 33.5  \\
L^{10}_{7}(x) & 4.45  & \fbox{7.37}  & \fbox{10.8}  & 15.0  & 20.0  & 26.4  & 35.0  \\
L^{11}_{7}(x) & \fbox{4.99}  & \fbox{8.08}  & 11.7  & 16.0  & 21.2  & 27.7  & 36.5  \\
L^{12}_{7}(x) & \fbox{5.55}  & \fbox{8.79}  & 12.5  & 17.0  & 22.3  & 29.0  & 37.9  \\
L^{13}_{7}(x) & \fbox{6.12}  & \fbox{9.52}  & 13.4  & 18.0  & 23.4  & 30.2  & 39.3  \\
L^{14}_{7}(x) & \fbox{6.70}  & \fbox{10.3}  & 14.3  & 19.0  & 24.6  & 31.5  & 40.7  \\
L^{15}_{7}(x) & \fbox{7.29}  & \fbox{11.0}  & 15.1  & 20.0  & 25.7  & 32.8  & 42.1  \\
L^{16}_{7}(x) & \fbox{7.88}  & \fbox{11.7}  & 16.0  & 21.0  & 26.8  & 34.0  & 43.5  \\
L^{17}_{7}(x) & 8.49  & 12.5  & 16.9  & 22.0  & 28.0  & 35.3  & 44.9  \\
L^{18}_{7}(x) & 9.11  & 13.2  & 17.8  & 23.0  & 29.1  & 36.5  & 46.3  \\
L^{19}_{7}(x) & 9.73  & 14.0  & 18.7  & 24.0  & 30.2  & 37.7  & 47.7 
\end{array}
$$
\label{Laguerre-2}
\end{table}

\newpage
We now consider whether the zeros of  two Laguerre polynomials of consecutive degree $n$ and $n+1$ corresponding to the parameters $\alpha$ and $\alpha +2,$  namely the zeros of $L_n^{(\alpha)}(x)$  and  $ L_{n+1}^{(\alpha +2)}(x),$ $\alpha > -1$, are interlacing. 

\begin{theorem}\label{Theorem 2.2.} Suppose $\{L_{n}^{(\alpha)}(x)\} _{n=0}^\infty $ is  a sequence of Laguerre polynomials with $ \alpha > -1$. Assume that $\alpha$ and $n \in \mathbb{N}$ are such that $ L_{n+1}^{(\alpha +2)}(x)$ and $ L_{n}^{(\alpha)}(x)$ have no common zeros.   If  $\{w_{i}\} _{i=1}^{n+1} $ are the zeros of  $ L_{n+1}^{(\alpha +2)}(x)$   in increasing order, then each of the $n+1$ intervals $(0, w_1), (w_1, w_2), \dots, (w_n, w_{n+1})$ contains either the point $ n + 1 + \sqrt{( n +1)(n + \alpha +2)}$ or exactly one (simple) zero of $L_n^{(\alpha)}(x),$ but not both.  The zeros of $L_n^{(\alpha)}(x)$  and  $ L_{n+1}^{(\alpha +2)}(x),$ $\alpha > -1,$  are interlacing if and only if the point  $ n +1   + \sqrt{( n +1)(n + \alpha +2)}$  lies in the open  interval $(0, w_1)$.  This occurs when $\alpha >>n.$ 
\end{theorem}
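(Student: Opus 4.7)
The plan is to adapt the strategy of Theorem \ref{Theorem 2.1.} by deriving a mixed recurrence that relates $L_{n+1}^{(\alpha+2)}(x)$, $L_{n+1}^{(\alpha)}(x)$, and $L_{n}^{(\alpha)}(x)$, producing a \emph{quadratic} coefficient whose only positive root is the extra point $\xi := n+1+\sqrt{(n+1)(n+\alpha+2)}$. Specifically, the identity I aim to establish is
\begin{equation*}
x^{2} L_{n+1}^{(\alpha+2)}(x) = \bigl[(x-(n+1))^{2} - (n+1)(n+\alpha+2)\bigr] L_{n+1}^{(\alpha)}(x) + (n+\alpha+1)(x+\alpha+1)\, L_{n}^{(\alpha)}(x).
\end{equation*}
The natural derivation is to apply \refe{(2.1)} with $\alpha$ replaced by $\alpha+1$, multiply through by $x$, and eliminate the resulting factor $xL_{n+1}^{(\alpha+1)}(x)$ using \refe{(2.1)} itself, while eliminating the factor $xL_{n}^{(\alpha+1)}(x)$ via the contiguous relation $xL_{n}^{(\alpha+1)}(x) = (n+\alpha+1)L_{n}^{(\alpha)}(x) - (n+1)L_{n+1}^{(\alpha)}(x)$ (which follows from \refe{(2.1)} indexed at $n-1$ combined with the three-term recurrence).

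Evaluating the identity at two consecutive zeros $w_k, w_{k+1}$ of $L_{n+1}^{(\alpha+2)}(x)$ and multiplying the resulting equations produces, in parallel with \refe{(2.4)},
\begin{equation*}
\mathcal{Q}(w_k)\,\mathcal{Q}(w_{k+1})\, L_{n+1}^{(\alpha)}(w_k) L_{n+1}^{(\alpha)}(w_{k+1}) = (n+\alpha+1)^{2}(w_k+\alpha+1)(w_{k+1}+\alpha+1)\, L_{n}^{(\alpha)}(w_k) L_{n}^{(\alpha)}(w_{k+1}),
\end{equation*}
where $\mathcal{Q}(x) := (x-(n+1))^{2} - (n+1)(n+\alpha+2)$. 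Because $t=2$ lies in the sharp interlacing interval $0<t\le 2$ for zeros of equal degree Laguerre polynomials \cite{DrMu2}, the product $L_{n+1}^{(\alpha)}(w_k) L_{n+1}^{(\alpha)}(w_{k+1})$ is strictly negative; since $\alpha>-1$ and each $w_j>0$, each $(w_j+\alpha+1)$ is positive; so the sign of $L_{n}^{(\alpha)}(w_k)L_{n}^{(\alpha)}(w_{k+1})$ is governed entirely by the sign of $\mathcal{Q}(w_k)\mathcal{Q}(w_{k+1})$. The roots of $\mathcal{Q}$ are $n+1 \pm \sqrt{(n+1)(n+\alpha+2)}$, and for $\alpha>-1$ only $\xi$ lies in $(0,\infty)$, so $\mathcal{Q}(w_k)\mathcal{Q}(w_{k+1})<0$ precisely when $\xi \in (w_k, w_{k+1})$, which can happen for at most one index $k$.

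From here the argument follows the template of Theorem \ref{Theorem 2.1.}: in each of the $n$ intervals $(w_k,w_{k+1})$ that does not contain $\xi$, $L_{n}^{(\alpha)}(x)$ has a sign change and hence one simple zero, while in the (at most one) interval containing $\xi$ it has none. The inequality chain $x_n < y_{n+1} < w_{n+1}$, which follows from interlacing of consecutive-degree Laguerre zeros together with Markov's monotonicity theorem applied to the parameter $t \in [0,2]$, forces the remaining zero of $L_{n}^{(\alpha)}(x)$ to lie in the initial interval $(0,w_1)$. Consequently, full interlacing is equivalent to $\xi \in (0,w_1)$, that is, to $w_1 > n+1+\sqrt{(n+1)(n+\alpha+2)}$.

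The one mildly delicate point is the closing claim that this occurs when $\alpha \gg n$. For fixed $n$, standard Laguerre zero asymptotics give $w_1 \sim \alpha$ as $\alpha\to\infty$, whereas $\xi \sim \sqrt{n\alpha}$, so $w_1 > \xi$ for sufficiently large $\alpha$. I expect no essential obstacle in the proof: once the quadratic identity is written down the sign analysis is automatic, and the main work is the careful bookkeeping structurally identical to that in Theorem \ref{Theorem 2.1.}.
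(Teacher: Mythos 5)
Your proposal is correct and follows essentially the same route as the paper: your quadratic identity is precisely the paper's \refe{(2.7)}, since $(x-(n+1))^{2}-(n+1)(n+\alpha+2)=x^{2}-(2n+2)x-(n+1)(\alpha+1)=a(x)$, and the subsequent sign analysis at consecutive zeros $w_k,w_{k+1}$ is the same. The only (immaterial) difference is that the paper obtains the identity by combining \refe{(2.6)} with \refe{(2.1)}, whereas you shift the parameter in \refe{(2.1)} and use an extra contiguous relation to eliminate $xL_n^{(\alpha+1)}(x)$.
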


\begin{proof} Let

\medskip

  $  0 < x_1 < x_2 < \dots < x_n $ denote  the zeros of  $L_n^{(\alpha)}(x);$  

\medskip

   $ 0 < y_1 < y_2 < \dots < y_n < y_{n+1}$  denote the zeros of $ L_{n+1}^{(\alpha)}(x);$  

\medskip

   $ 0 < w_1 < w_2 < \dots < w_n < w_{n+1}$  denote the zeros of $ L_{n +1}^{(\alpha +2)}(x)$.  

\medskip

From  \cite [Lem.2.1]{DrJo} with $m = 2$ and replacing $n$ by $n +1$, we have

\begin{equation}\label{(2.6)}  x L_{n+1}^{(\alpha +2)}(x) = (x +\alpha +1)  L_{n+1}^{(\alpha +1)}(x)  -  (\alpha +n +2)   L_{n +1}^{(\alpha)}(x),  \end{equation}

Multiplying  \refe{(2.6)}  by $x$  and using the expression in \refe{(2.1)}  for $x L_{n+1}^{(\alpha +1)}(x) $ leads to

\begin{equation}\label{(2.7)}  x^2 L_{n+1}^{(\alpha +2)}(x) = a(x)  L_{n+1}^{(\alpha)}(x)  + b(x)   L_{n}^{(\alpha)}(x),  \end{equation} 

where

\begin{align*}\label{(2.8)} a(x) &=  x^2 - (2n+2)x - (n+1)(\alpha +1); \quad b(x) = (x + \alpha +1)(\alpha +n +1)\\
&=(x-a_1)(x-a_2),  \end{align*} 
with $a_1=  n + 1 + \sqrt{( n +1)(n + \alpha +2)}$, $a_2=  n + 1 - \sqrt{( n +1)(n + \alpha +2)}$.

\medskip

For each $n \in \mathbb{N}$ and each $\alpha > -1,$   the quadratic function $a(x)$ has one positive zero $ a_1$ that tends to $\infty$ as $n\to \infty$ and one negative zero $a_2$ that tends to $- \dfrac{\alpha+1}{2}$ as $n\to\infty$, while the linear factor $b(x)$ has one  negative zero.   Evaluating \refe{(2.7)} at successive positive zeros $w_k$ and $w_{k+1}$ of  $ L_{n +1}^{(\alpha +2)}(x),$ we have for each $k = 1,2,\dots,n$,

\begin{equation}\label{(2.9)}  a(w_k)  a(w_{k+1})  L_{n+1}^{(\alpha)}(w_k) L_{n+1}^{(\alpha)}(w_{k+1})  = b(w_k)  b(w_{k+1})  L_{n}^{(\alpha)}(w_k) L_{n}^{(\alpha)}(w_{k+1}).  \end{equation}

Since $b(x) \neq 0$ for $x$ positive  while  $ L_{n+1}^{(\alpha)}(w_k) L_{n+1}^{(\alpha)}(w_{k+1}) < 0 $ for each $k = 1,2,\dots, n$  because the zeros of  $ L_{n +1}^{(\alpha )}(x)$ and $ L_{n +1}^{(\alpha +2)}(x)$ are interlacing, it follows from \refe{(2.9)} that each of the $n$ intervals $(w_k, w_{k+1})$ has either (i) exactly one zero of  $ L_{n}^{(\alpha)}(x)$ and not the single positive zero of $a(x)$ or (ii) the positive zero of $a(x)$ and no zero of  $ L_{n}^{(\alpha)}(x)$. If (i) holds, the zeros of $ L_{n}^{(\alpha)}(x)$  and $ L_{n +1}^{(\alpha +2)}(x)$ are interlacing while if (ii) holds, $n-1$ zeros of $ L_{n}^{(\alpha)}(x)$ together with the point $ x =  n + 1 + \sqrt{( n +1)(n + \alpha +2)}$ interlace with the zeros of $ L_{n +1}^{(\alpha +2)}(x).$  The same analysis as in Theorem 2.1 shows that if (ii) holds, the smallest zeros of $ L_{n}^{(\alpha)}(x)$, $L_{n +1}^{(\alpha)}(x)$ and  $ L_{n +1}^{(\alpha +2)}(x)$  satisfy

\begin{eqnarray*}   0 <y_1 <  x_1 < w_1 < y_2 <  \dots   \end{eqnarray*}

\medskip

This completes the proof of Theorem \ref{Theorem 2.2.}.
\end{proof}

\begin{table}[!htbp]%[H]
\caption{The zeros of $L_{n}^{(\alpha)}(x)$ and $L_{n+1}^{(\alpha+2)}(x)$ for $n=5$ and $\alpha=43$. The point $x= n+1 +\sqrt{(n+1)(n+\alpha +2)}$ is smaller than the smallest root of $L_{n+1}^{(\alpha +2)}(x).$ Full interlacing of the zeros holds.
}
\begin{tabular}{|r|r|r|r|r|r|r|}
\hline
$L_{n+1}^{(\alpha+2)}(x)$ & $w_1$  & $w_2$  & $w_3$  & $w_4$  & $w_5$ & $w_6$  \\
\hline
$L_{n}^{(\alpha)}(x)$   & $x_1$  & $x_2$  & $x_3$  & $x_4$  & $x_5$ & --  \\
\hline\hline & & & & & &  \\
$L_{6}^{(45)}(x)$  & 29.3629 & 37.1301 & 45.0889 & 53.8171 & 63.9142 & 76.6867\\   
$L_{5}^{(43)}(x)$   & 29.6552 & 37.9457 & 46.6608 & 56.6079 & 69.1304 & -- \\
\hline \end{tabular}
\label{Laguerre-5-43}
\end{table}

%%%
\begin{table}[!htbp]%[H]
\caption{The zeros of $L_{n}^{(\alpha)}(x)$ and $L_{n+1}^{(\alpha+2)}(x)$, for $n=7$ and $\alpha=100$. The point $x= n+1 +\sqrt{(n+1)(n+\alpha +2)}$ is smaller than the smallest root of $L_{n+1}^{(\alpha +2)}(x).$ Full  interlacing of the zeros holds. 
}
\begin{tabular}{|r|r|r|r|r|r|r|r|r|}
\hline
$L_{n+1}^{(\alpha+2)}(x)$ & $w_1$  & $w_2$  & $w_3$  & $w_4$  & $w_5$ & $w_6$ & $w_7$ & $w_8$ \\
\hline
$L_{n}^{(\alpha)}(x)$   & $x_1$  & $x_2$  & $x_3$  & $x_4$  & $x_5$ &  $x_6$ & $x_7$ & --  \\
\hline\hline & & & & & & & & \\
$L_{8}^{(102)}(x)$ & 70.0175 & 81.022 & 91.4896 & 102.139 & 113.376 & 125.623 & 139.552 & 156.781 \\   
$L_{7}^{(100)}(x)$ & 70.9694 & 82.4842 & 93.5606 & 104.995 & 117.321 & 131.252 & 148.418  & -- \\
\hline \end{tabular}
\label{Laguerre-7-100}
\end{table}

%%%

\begin{remark} The same technique used in  Theorem 2.1 and Theorem 2.2  can be used to discuss the interlacing of the zeros of $ L_{n}^{(\alpha )}(x)$ and $ L_{n +1}^{(\alpha +3)}(x).$

A calculation using  \refe{(2.6)} with $\alpha$ replaced by $\alpha +1$ together with

\begin{equation*}\label{(2.8a)}  x^2 L_{n+1}^{(\alpha +3)}(x) = (x +\alpha +2) \left[(x+\alpha+1) L_{n+1}^{(\alpha +1)}(x)  -  (\alpha +n +2)   L_{n +1}^{(\alpha)}(x)\right] -(\alpha +n+3)x  L_{n+1}^{(\alpha +1)}(x),  \end{equation*}

and  \refe{(2.1)} leads to

\begin{equation}\label{(2.9a)}  x^2 (x-n-1)L_{n+1}^{(\alpha +3)}(x) = a(x) L_{n+1}^{(\alpha +1)}(x) + b(x) L_{n}^{(\alpha)}(x), \end{equation}

where

\begin{align*}  a(x) &= (x-n-1)[(x+\alpha+2)(x+\alpha+1)-x(\alpha+n+3)]-x(x+\alpha+2)(\alpha+n+2),\\
b(x) &= (\alpha+n+1)(\alpha+n+2)(x +\alpha+2).
\end{align*}

Evaluating \refe{(2.9a)} at successive positive zeros $x_k$ and $x_{k+1}$ of $L_{n+1}^{(\alpha +3)}(x),$  we know that $L_{n+1}^{(\alpha +1)}(x_k)L_{n+1}^{(\alpha +1)}(x_{k+1}) <0$ since the zeros of two equal degree Laguerre polynomials with parameter difference $2$  are interlacing. Also $b(x_k) b(x_{k+1}) >0$ for each $k \in {1,2,\dots,n}$ since $x +\alpha +2>0$ for $x>0,\alpha>-1.$  It follows that the location (positivity and real/complex) of the zeros of the cubic function $a(x)$ will determine the extent of the interlacing of the zeros of $ L_{n}^{(\alpha )}(x)$ and $ L_{n +1}^{(\alpha +3)}(x).$
\end{remark}

\section{Sharp $t-$ interval for interlacing of zeros of $L_n^{(\alpha)}(x)$ and $L_n^{(\alpha +t)}(x),$  $\alpha > -1, t >0$ }

Using results proved by P\'almai, \cite{Pal}, for the sharp interlacing of  the positive zeros of Bessel functions $J_{\mu}(x)$ and $J_{\nu}(x)$, it was proved in  \cite{DrMu2} that  $0 < t \leq 2$ is the sharp $t-$ interval within which the $n$ zeros of $L_n^{(\alpha)}(x)$ and $L_n^{(\alpha +t)}(x),$  $t >0,$ are  interlacing for every value of $n \in \mathbb{N}$ and every value of $\alpha >-1$.   More precisely, given any $\epsilon >0,$ there exists $n \in \mathbb{N},$ $\alpha > -1,$  such that the zeros of  $L_n^{(\alpha)}(x)$ and $L_n^{(\alpha +2 +\epsilon)}(x)$ are not interlacing. Full interlacing of the zeros breaks down when  $n \in \mathbb{N}$ is large. The question arises as to whether interlacing of the zeros of $L_n^{(\alpha)}(x)$ and $L_n^{(\alpha +t)}(x)$  breaks down in a structured way in the sense that there is partial, but not full,  interlacing  of zeros  of  $L_n^{(\alpha)}(x)$ and $L_n^{(\alpha +t)}(x),$  $\alpha > -1 $ when $t >2.$  

\bigskip

\begin{theorem}\label{Theorem 3.1.} Fix $ \alpha > -1$ and suppose $\{L_{n}^{(\alpha)}(x)\} _{n=0}^\infty $ is  the sequence of Laguerre polynomials. Assume that $\alpha$ and $n \in \mathbb{N}$ are such that $ L_{n}^{(\alpha +3)}(x)$ and $ L_{n}^{(\alpha)}(x)$ have no common zeros.   The zeros of  $ L_{n}^{(\alpha +3)}(x)$  and $L_n^{(\alpha)}(x)$ are interlacing if  $\alpha +1 \geq n.$  For   $\alpha +1 < n$ and $n \geq 3,$   at least $n-2$  zeros of  $ L_{n}^{(\alpha )}(x)$   interlace with the zeros of $L_n^{(\alpha +3)}(x).$ 
\end{theorem}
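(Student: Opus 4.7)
The plan is to imitate the mixed-recurrence technique of Theorems 2.1 and 2.2. The goal is to produce an identity of the form
\begin{equation*}
x^2 L_n^{(\alpha+3)}(x) \;=\; A(x)\, L_n^{(\alpha+1)}(x) + B(x)\, L_n^{(\alpha)}(x)
\end{equation*}
in which the middle polynomial is chosen to be $L_n^{(\alpha+1)}$ (not the more obvious $L_n^{(\alpha+2)}$), because the zeros of $L_n^{(\alpha+1)}$ and $L_n^{(\alpha+3)}$ are interlacing by the sharp parameter-difference-$2$ result of \cite{DrMu2}, which supplies the structural input we need. To derive the identity, start from the version of \refe{(2.6)} in which $\alpha$ is shifted up by one, multiply by $x$, and substitute \refe{(2.6)} itself for $xL_n^{(\alpha+2)}(x)$. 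A short computation yields $A(x)=x^2-(n-\alpha-1)x+(\alpha+1)(\alpha+2)$ and $B(x)=-(x+\alpha+2)(\alpha+n+1)$.

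Evaluating the identity at two consecutive zeros $v_k,v_{k+1}$ of $L_n^{(\alpha+3)}$ and multiplying as in \refe{(2.9)} gives
\begin{equation*}
A(v_k)A(v_{k+1})\, L_n^{(\alpha+1)}(v_k) L_n^{(\alpha+1)}(v_{k+1}) \;=\; B(v_k)B(v_{k+1})\, L_n^{(\alpha)}(v_k) L_n^{(\alpha)}(v_{k+1}).
\end{equation*}
Since $B(x)<0$ for $x>0$ we have $B(v_k)B(v_{k+1})>0$, and the interlacing just cited gives $L_n^{(\alpha+1)}(v_k)L_n^{(\alpha+1)}(v_{k+1})<0$, so a sign change of $L_n^{(\alpha)}$ on $(v_k,v_{k+1})$ is forced whenever $A(v_k)A(v_{k+1})>0$. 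The roots of $A$ have product $(\alpha+1)(\alpha+2)>0$ and sum $n-\alpha-1$. When $\alpha+1\geq n$ the sum is non-positive, so $A$ has no positive real root and $A(x)>0$ on $(0,\infty)$; this produces $n-1$ sign changes of $L_n^{(\alpha)}$ across the intervals $(v_k,v_{k+1})$, and combined with Markov's monotonicity theorem (which forces the smallest zero of $L_n^{(\alpha)}$ into $(0,v_1)$) accounts for all $n$ zeros in interlacing positions, establishing the first assertion.

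For $\alpha+1<n$ the quadratic $A$ may have two positive real roots. Since $A$ has at most two real roots, at most two of the $n-1$ intervals $(v_k,v_{k+1})$ can contain exactly one root of $A$, hence at most two can fail the sign condition $A(v_k)A(v_{k+1})>0$. Sign changes of $L_n^{(\alpha)}$ are therefore forced in at least $n-3$ of them, and together with the zero supplied by $(0,v_1)$ via Markov this yields at least $n-2$ zeros of $L_n^{(\alpha)}$ that lie in interlacing position with the zeros of $L_n^{(\alpha+3)}$. The main obstacle is the bookkeeping in this second case: one has to enumerate the ways the two positive roots of $A$ can distribute among $(0,v_1)$, the intermediate intervals, and $(v_n,\infty)$ in order to confirm the "at most two failing intervals" bound, and one must invoke Markov's monotonicity carefully to ensure that the interlacing zero contributed by $(0,v_1)$ is genuinely distinct from those counted via sign changes in $(v_1,v_n)$.
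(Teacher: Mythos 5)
Your proposal is correct and follows essentially the same route as the paper: you derive the identity $x^2L_n^{(\alpha+3)}(x)=a(x)L_n^{(\alpha+1)}(x)-b(x)L_n^{(\alpha)}(x)$ with the same $a(x)=x^2+(\alpha+1-n)x+(\alpha+1)(\alpha+2)$ and $b(x)=(\alpha+n+1)(x+\alpha+2)$ as the paper's \refe{(3.3)}--\refe{(3.4)}, and run the identical sign-change count using the parameter-difference-$2$ interlacing of $L_n^{(\alpha+1)}$ and $L_n^{(\alpha+3)}$ together with $x_1<w_1$. Your remaining ``bookkeeping'' worry is harmless (at most two of the $n-1$ interior intervals can contain an odd number of roots of $a$, and $(0,w_1)$ is disjoint from them), and your observation that for $\alpha+1<n$ the roots of $a$ only \emph{may} be positive reals is in fact slightly more careful than the paper's assertion that they are.
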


\begin{proof} From  \cite [Lem.2.1]{DrJo} with $m = 3$ and $m = 2$ respectively, we have

\begin{equation}\label{(3.1)}  x L_{n}^{(\alpha +3)}(x) = (x +\alpha +2)  L_{n}^{(\alpha +2)}(x)  -  (\alpha +n +2)   L_{n}^{(\alpha +1)}(x) ,  \end{equation}

and

\begin{equation}\label{(3.2)}  x L_{n}^{(\alpha +2)}(x) = (x +\alpha +1)  L_{n}^{(\alpha +1)}(x)  -  (\alpha +n +1)   L_{n}^{(\alpha)}(x).  \end{equation} 

Multiplying \refe{(3.1)} by $x$ and using \refe{(3.2)}, 

\begin{equation}\label{(3.3)}  x^2 L_{n}^{(\alpha +3)}(x) = a(x)  L_{n}^{(\alpha +1)}(x)  -   b( x)   L_{n}^{(\alpha)}(x)    \end{equation} 
 
where

\begin{align}\label{(3.4)}   a(x) =  x^2 + (\alpha + 1 - n) x +(\alpha +1)(\alpha +2) ; \quad  b(x) =  (\alpha +  n +1 )(x +\alpha +2) .\end{align}

\medskip

The quadratic equation $a(x) =0$  has $2$ real zeros or a single zero of order $2$ or $2$ complex conjugate zeros depending on the values of $n \in \mathbb{N} $ and $\alpha > -1$  while the linear equation $b(x) =0$ has a single negative zero at $x = -\alpha -2$.  For every $\alpha > -1$, $a(x)$ is a monic quadratic function with positive constant term $(\alpha+1)(\alpha +2).$  It follows  from \refe{(3.4)} that  $a(x)$ has two positive zeros when $\alpha +1 <n;$   two negative zeros when $\alpha +1 > n;$  and  two pure imaginary zeros when  $\alpha +1 =  n.$  In other words, $a(x)$ has no positive zeros when $\alpha +1 \geq n.$

 Evaluating \refe{(3.3)} at successive positive zeros $w_k$ and $w_{k+1},$  $k = 1,2,\dots,n-1,$  of  $ L_{n}^{(\alpha +3)}(x),$   we have

\begin{equation}\label{(3.5)}  a(w_k)  a(w_{k+1})  L_{n}^{(\alpha +1)}(w_k) L_{n}^{(\alpha +1)}(w_{k+1})  =  b(w_k)  b(w_{k+1})  L_{n}^{(\alpha)}(w_k) L_{n}^{(\alpha)}(w_{k+1}).  \end{equation} 

Since  the zeros of  $ L_{n}^{(\alpha +1 )}(x)$ and $ L_{n}^{(\alpha +3)}(x)$ are interlacing  (equal degree with parameter  difference  $2$),  we know that  $ L_{n}^{(\alpha +1)}(w_k) L_{n}^{(\alpha +1)}(w_{k+1}) < 0 $ for each $k = 1,2,\dots, n -1.$  Also,  $b(w_k)  b(w_{k+1})>0$ for each $k = 1,2,\dots, n -1,$ since $b(x)$ has no positive zeros.  If $\alpha +1 \geq n,$ it follows from \refe{(3.5)} that  $ L_{n}^{(\alpha)}(x)$  has a different sign at successive positive zeros $w_k$ and $w_{k+1},$  $k = 1,2,\dots,n-1,$  of  $ L_{n}^{(\alpha +3)}(x).$ Also, from \cite[Th. 2.3]{DrJo},  the smallest zero $x_1$ of $ L_{n}^{(\alpha)}(x)$  satisfies  $ 0 <x_1 <  w_1$ and  it follows that the zeros of $ L_{n}^{(\alpha)}(x)$  and $ L_{n}^{(\alpha +3)}(x)$ are interlacing if $\alpha +1 \geq n$.   If  $\alpha +1  < n, $  $ a(x)$ has $2$  positive zeros and at least $n-2$ zeros of $ L_{n}^{(\alpha)}(x)$ interlace with the  zeros $w_k$ and $w_{k+1}$  of  $ L_{n}^{(\alpha +3)}(x).$   The precise number of zeros of  $ L_{n}^{(\alpha)}(x)$ and $ L_{n}^{(\alpha +3)}(x)$ that are interlacing when $ \alpha +1 < n$ depends on the location of the two zeros of $a(x)$ relative to the $n$ zeros of $ L_{n}^{(\alpha +3)}(x)$.

\end{proof}

\begin{remark}
The numerical results in Table \ref{Laguerre-2} confirm that a definitive statement cannot be made about the interlacing of the zeros of $L_n^{(\alpha)}(x)$ and $L_n^{(\alpha +3)}(x)$ when $\alpha +1 <n.$ Suppose that  $(i)$ $\alpha = 0$, $n=7$. Then $\alpha +1 <7$ and Table \ref{Laguerre-2} shows that there is no zero of $ L_{7}^{(3)}(x)$ between the two smallest zeros $x_1$ and $x_2$ of  $ L_{7}^{(0)}(x)$.  Further, when $\alpha = 0$, $n=7$,  we see from \refe{(3.4)} that  $a(x) = x^2 - 6 x +2$  which has distinct positive zeros at $x = 3 - \sqrt{7}$ and $x = 3 + \sqrt{7}$. The smaller of these two zeros of $a(x)$ "fills the gap" between the two smallest  zeros $x_1$ and $x_2$ of  $ L_{7}^{(0)}(x)$ that does not contain a zero of $ L_{7}^{(3)}(x)$ while the larger zero of $a(x)$ lies in the "boxed interval" that contains the point $x = n +1 = 8$. Therefore, for these values of $\alpha$ and $n$ with $\alpha +1 <n$, the zeros of  $L_n^{(\alpha)}(x)$ and $L_n^{(\alpha +3)}(x)$ are not interlacing.  Now suppose  $(ii)$ that $\alpha = 1$, $n=7$ so that  $\alpha +1 <7$. Table \ref{Laguerre-2} shows that the zeros of $ L_{7}^{(1)}(x)$ and  $ L_{7}^{(4)}(x)$  are interlacing. Here, when $\alpha = 1$, $n=7$,  the two distinct positive zeros of  $a(x) = x^2 - 5 x +6$ are at $x =2$ and $x = 3$ which  lie in the same interval with endpoints at the successive zeros $1.55$ and $3.34$ of $ L_{7}^{(4)}(x).$  We can see that Table \ref{Laguerre-2} is consistent with Theorem 3.1 when  $\alpha +1 =n$ and when $\alpha +1 >n.$ For example,  if $\alpha =6$ and $n=7$ so that  $\alpha +1 =n$, the zeros of $ L_{7}^{(6)}(x)$ and the zeros of  $ L_{7}^{(9)}(x)$  are interlacing as expected. Also, if  $\alpha = 11$ and $n=7,$ so that $\alpha +1 >n,$ the zeros of $ L_{7}^{(11)}(x)$ and  $ L_{7}^{(14)}(x)$  are interlacing.
\end{remark}

\medskip

We now consider  interlacing of the zeros of equal degree Laguerre polynomials  $ L_{n}^{(\alpha +4)}(x)$  and $L_n^{(\alpha)}(x)$.

\medskip

\begin{theorem}\label{Theorem 3.2.}  Fix $ \alpha > -1$ and suppose $\{L_{n}^{(\alpha)}(x)\} _{n=0}^\infty $ is  a sequence of Laguerre polynomials.   Let  $n \in \mathbb{N}, n \geq 3.$ Assume that $\alpha$ and $n \in \mathbb{N}$ are such that $ L_{n}^{(\alpha +4)}(x)$ and $ L_{n}^{(\alpha)}(x)$ have no common zeros.  The zeros of   $L_{n}^{(\alpha +4)}(x)$  and  $ L_{n}^{(\alpha)}(x)$  are interlacing for values of $n$ and $\alpha$ for which the following conditions hold: The cubic polynomial  $p_3(x) = p_3(x,n,\alpha)$  defined by \refe{(3.11)} below has one real zero $l_1 = l_1(\alpha,n)$ which is $ < w_1,$ where $w_1$ is the smallest (positive) zero of  $ L_{n}^{(\alpha +4)}(x)$  and the remaining two zeros of $p_3(x)$ are either negative, or have a  non-zero imaginary part. 
\end{theorem}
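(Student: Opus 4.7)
The plan is to follow the template of Theorems 2.2 and 3.1: derive a mixed recurrence relation of the form
\begin{equation*}
x^{2}\,(x+\alpha+1)\,L_{n}^{(\alpha+4)}(x) \;=\; p_3(x)\,L_n^{(\alpha+2)}(x) + q(x)\,L_n^{(\alpha)}(x),
\end{equation*}
with $p_3(x)$ a monic cubic and $q(x)$ a linear factor, and then exploit the known interlacing of the zeros of $L_n^{(\alpha+2)}$ and $L_n^{(\alpha+4)}$ (equal degree, parameter difference $2$) together with the sign of $q$. To produce this identity, which will be equation (3.11), I would start from the parameter-shifted analogue of (3.1),
\begin{equation*}
x L_n^{(\alpha+4)}(x) \;=\; (x+\alpha+3) L_n^{(\alpha+3)}(x) - (\alpha+n+3) L_n^{(\alpha+2)}(x),
\end{equation*}
substitute (3.1) for $L_n^{(\alpha+3)}$ to obtain a combination of $L_n^{(\alpha+2)}$ and $L_n^{(\alpha+1)}$, then eliminate $L_n^{(\alpha+1)}$ using the rearranged (3.2), $(x+\alpha+1)L_n^{(\alpha+1)}(x) = x L_n^{(\alpha+2)}(x) + (\alpha+n+1)L_n^{(\alpha)}(x)$. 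A short computation then identifies $p_3$ as a monic cubic with positive constant term $(\alpha+1)(\alpha+2)(\alpha+3)$, and $q(x) = -(x+\alpha+3)(\alpha+n+2)(\alpha+n+1)$, which is strictly negative for $x>0$.

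Evaluating (3.11) at two consecutive positive zeros $w_k,w_{k+1}$ of $L_{n}^{(\alpha+4)}(x)$ annihilates the left side and gives
\begin{equation*}
p_3(w_k)\,p_3(w_{k+1})\,L_n^{(\alpha+2)}(w_k)\,L_n^{(\alpha+2)}(w_{k+1}) \;=\; q(w_k)\,q(w_{k+1})\,L_n^{(\alpha)}(w_k)\,L_n^{(\alpha)}(w_{k+1}).
\end{equation*}
The equal-degree, parameter-difference-$2$ interlacing of \cite[Th.~2.3]{DrJo} forces $L_n^{(\alpha+2)}(w_k)L_n^{(\alpha+2)}(w_{k+1})<0$, and the sign of $q$ forces $q(w_k)q(w_{k+1})>0$. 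The hypothesis on $p_3$ --- one real zero $l_1<w_1$, the remaining two negative or with non-zero imaginary part --- together with $p_3$ being monic forces $p_3$ to be strictly positive on $[w_1,w_n]$, so $p_3(w_k)p_3(w_{k+1})>0$ for each $k=1,\dots,n-1$. Therefore $L_n^{(\alpha)}(w_k)L_n^{(\alpha)}(w_{k+1})<0$, producing (at least) one zero of $L_n^{(\alpha)}$ in each of the $n-1$ open intervals $(w_k,w_{k+1})$. The $n$th zero is supplied by Markov monotonicity of Laguerre zeros in the parameter, which guarantees that the smallest zero $x_1$ of $L_n^{(\alpha)}$ satisfies $x_1<w_1$; pigeonholing then yields full interlacing.

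The main obstacle is really just the algebraic bookkeeping in the first step: carrying the spurious factor $x^{2}(x+\alpha+1)$ on the left under control, verifying that the coefficient of $L_n^{(\alpha+2)}$ is indeed the monic cubic displayed in (3.11) (so that the hypothesis on the locations of its zeros is both meaningful and numerically checkable in examples), and confirming that the coefficient of $L_n^{(\alpha)}$ has no positive real zero. Once that identity is pinned down, the remainder is the same sign-alternation scheme already used in Theorems 2.2 and 3.1.
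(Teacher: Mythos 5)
Your proposal is correct and follows essentially the same route as the paper: the identity you describe is exactly the paper's equation (3.11), obtained by the same chain of substitutions from the contiguous relations, with the same $p_3(x)$ and the same linear coefficient $-(x+\alpha+3)(\alpha+n+2)(\alpha+n+1)$, and the sign-alternation argument at consecutive zeros of $L_n^{(\alpha+4)}(x)$ is identical. Your closing step (Markov monotonicity placing $x_1<w_1$ so that pigeonholing completes the interlacing) is a slightly more explicit justification of the final count than the paper gives, but it is the same argument in substance.
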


\begin{proof}   From  \cite [Lem.2.1]{DrJo} with $m = 4$ and $m = 3$ respectively, we have

\begin{equation}\label{(3.7)}  x L_{n}^{(\alpha +4)}(x) = (x +\alpha +3)  L_{n}^{(\alpha +3)}(x)  -  (\alpha +n +3)   L_{n}^{(\alpha +2)}(x) ,  \end{equation}

\begin{equation}\label{(3.8)}  x L_{n}^{(\alpha +3)}(x) = (x +\alpha +2)  L_{n}^{(\alpha +2)}(x)  -  (\alpha +n +2)   L_{n}^{(\alpha +1)}(x).  \end{equation} 

Multiplying \refe{(3.7)} by $x$ and using \refe{(3.8)}, we have

\begin{equation}\label{(3.9)}  x^2 L_{n}^{(\alpha +4)}(x) = d(x)  L_{n}^{(\alpha +2)}(x)  -   e( x)   L_{n}^{(\alpha +1)}(x),    \end{equation} 
 
where  $d(x) =  (x + \alpha + 3)( x + \alpha +2)- x(\alpha +n +3) $  and $ e(x) =  (x + \alpha + 3)  (\alpha +  n +2 )$.

\medskip

Also,  from  \cite [Lem.2.1]{DrJo} with $m = 2$

\begin{equation}\label{(3.10)}  (x + \alpha + 1) L_{n}^{(\alpha +1)}(x) =  x  L_{n}^{(\alpha +2)}(x)  +  (\alpha +n +1)   L_{n}^{(\alpha)}(x),    \end{equation} 

so that multiplying \refe{(3.9)} by $(x +\alpha +1) $ and using   \refe{(3.10)}, we have

\begin{equation}\label{(3.11)}   x^2  (x + \alpha + 1) L_{n}^{(\alpha +4)}(x) =  p_{3}(x)   L_{n}^{(\alpha +2)}(x)  -  b_{1}(x)   L_{n}^{(\alpha)}(x),   \end{equation}

where  

\begin{equation*}\label{(3.12)}  p_{3}(x) = (x + \alpha + 3)(x + \alpha + 2)(x + \alpha + 1) - x  ( x + \alpha + 1)  (\alpha +n +3 ) -  x(x + \alpha +3) (\alpha +n +2), \end{equation*}

is a polynomial of degree $3$ in $x$ and 

\begin{equation*} b_{1}(x) = ( x + \alpha + 3)  (\alpha +n +2 )(\alpha +n +1 ), \end{equation*} 

is linear in $x$ with one negative zero. Moreover,

$$
p_3(x)=(x-l_1(\alpha,n))(x-l_2(\alpha,n))(x-l_3(\alpha,n)),
$$
where
\begin{align*}
l_1(\alpha,n)&=\frac{1}{3} \left(2n-1 - \alpha  -\frac{h_3}{\left(h_2 + \sqrt{h_1}\right)^{1/3}} + \left(h_2 + \sqrt{h_1}\right)^{1/3}\right),\\
l_2(\alpha,n)&=\frac{1}{3} \left(2n-1 - \alpha + \frac{(1 +  i \sqrt{3})h_3}{2\left(h_2 + 
       \sqrt{h_1}\right)^{1/3}} % \right.\\
      % &+ 
   %\left. 
   -\dfrac{2\left(h_2 + \sqrt{h_1}\right)^{1/3}}{(1 + i\sqrt{3})}\right),\\
l_3(\alpha,n)&=\frac{1}{3} \left(2n-1 - \alpha + \frac{(1 -  i \sqrt{3}) h_3}{2\left(h_2 +  \sqrt{h_1}\right)^{1/3}} %\right.\\
       %&\left. - 
    -\dfrac{2\left(h_2 + \sqrt{h_1}\right)^{1/3}}{(1 - i\sqrt{3})}\right),
\end{align*}
and   
\begin{align*}
h_{1} = h_{1}(\alpha,n)&=(\alpha-2 n+1)^3 (2 \alpha+2 n+5)^3\\
&+\left(10 \alpha^3+66 \alpha^2+129 \alpha-8 n^3-6 (\alpha+4) n^2+6 (\alpha+1) (2 \alpha+5) n+73\right)^2,\\
h_{2} = h_{2}(\alpha,n)&=-10 \alpha^3-6 (2 n+11) \alpha^2+3 (2 (n-7) n-43) \alpha+8 n^3+24 n^2-30 n-73,\\
h_{3} = h_{3}(\alpha,n)&=(\alpha-2 n+1) (2 \alpha+2 n+5).
\end{align*}

Evaluating  \refe{(3.11)} at successive positive zeros $w_k$ and $w_{k+1}$  of $ L_{n}^{(\alpha +4)}(x),$ $k = 1,2,\dots,n-1,$ we have

\begin{equation*}  p_{3} (w_k)  p_{3}({w_{k+1}})   L_{n}^{(\alpha +2)}(w_k) L_{n}^{(\alpha +2)}(w_{k+1})  =  b_1(w_k)  b_1(w_{k+1})  L_{n}^{(\alpha)}(w_k) L_{n}^{(\alpha)}(w_{k+1}).  \end{equation*}

We know that  the zeros of  $ L_{n}^{(\alpha +2 )}(x)$ and $ L_{n}^{(\alpha +4)}(x)$ are interlacing  (equal degree with parameter difference  $2$) so  $ L_{n}^{(\alpha +2)}(w_k) L_{n}^{(\alpha +2)}(w_{k+1}) < 0 $ for each $k = 1,2,\dots, n -1$.  Also,  $b_1(w_k) b_1(w_{k+1})>0,$ since $b_1(x)$ has one negative zero at $x = -\alpha -3$ and does not change sign for any positive value of $x$.   Assuming that $l_2(\alpha,n)$ and  $l_3(\alpha,n)$ are complex with non-vanishing imaginary part for $\alpha >-1,$ $n \in \mathbb{N},$ it follows that $p_3(x)$ has at most one positive simple zero $l_1$ greater than $w_1.$ If  $l_1 <w_1, $ the zeros of  $ L_{n}^{(\alpha +4)}(x)$ and $ L_{n}^{(\alpha)}(x)$ are interlacing while if  $l_1 >w_1, $  each interval $(w_k, w_{k+1}),$  $k = 1,2,\dots,n-1,$ with endpoints at successive zeros of  $ L_{n}^{(\alpha +4)}(x)$ contains either exactly one zero of $ L_{n}^{(\alpha)}(x)$  and not the point $l_1$ or the point $l_1$ and no zero of $ L_{n}^{(\alpha)}(x).$

\end{proof}

\begin{remark} The zeros of  $ L_{n}^{(\alpha +4 )}(x)$ and $ L_{n}^{(\alpha)}(x)$ interlace only if  $n <<\alpha,$ i.e., for a small and restricted set of polynomials. \end{remark}

\begin{center}
\includegraphics[scale=0.99]{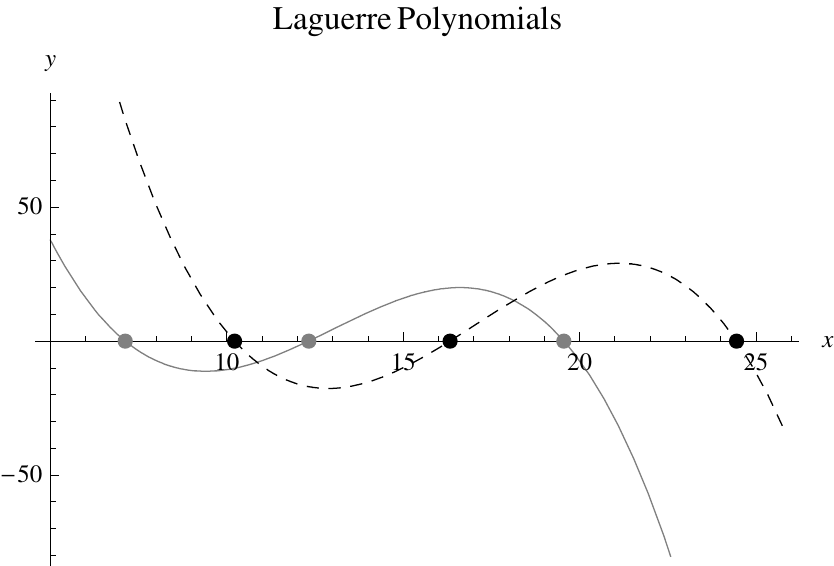}
\captionof{figure}{The roots of $L_{3}^{10}(x)$ are depicted by dots in gray and those of $L_{3}^{14}(x)$ are the black dots.}
\label{fig_Laguerre_zeros_3-3_2-6}
\end{center}

\section{Sharp $t-$ interval for interlacing of zeros of $L_n^{(\alpha)}(x)$ and $L_{n-1}^{(\alpha +t)}(x),$  $\alpha > -1, t >0$ }

It is known \cite [Th. 3.1]{DrMu1} that  the sharp $t-$ interval within which the zeros of the Laguerre polynomials $L_{n-1}^{(\alpha +t)}(x)$ and the  zeros of  $L_n^{(\alpha)}(x), \alpha > -1$ are interlacing is $0 \leq t \leq 2.$  The question arises as to what extent (if any)  partial interlacing holds between the zeros of $L_{n-1}^{(\alpha +t)}(x)$ and  $L_n^{(\alpha)}(x),$  $\alpha > -1,$ when $t >2$. We consider the cases $t=3$ and $t=4$ and use two mixed three term recurrence relations \cite [(3.18)]{DrMu1} and \cite [(3.13)]{DrMu1}, namely

\begin{equation}\label{(4.1)}  x^2  L_{n-1}^{(\alpha +2)}(x) = -n  (x +\alpha +1)  L_{n}^{(\alpha)}(x)  +(\alpha +1)   (\alpha +n)   L_{n-1}^{(\alpha)}(x),  \end{equation}

and

\begin{equation} \label{(4.2)}  L_{n}^{(\alpha +1)}(x) =   L_{n}^{(\alpha)}(x)  +    L_{n-1}^{(\alpha +1)}(x).  \end{equation} 

\bigskip

\begin{theorem}\label{Theorem 4.1.} Suppose $\{L_{n}^{(\alpha)}(x)\} _{n=0}^\infty $ is  a sequence of Laguerre polynomials with $ \alpha > -1$ and let $\{x_{i}\} _{i=1}^{n} $ be the zeros of  $ L_{n}^{(\alpha)}(x)$  in increasing order. Assume that $\alpha$ and $n \in \mathbb{N}$ are such that $ L_{n-1}^{(\alpha +3)}(x)$ and $ L_{n}^{(\alpha)}(x)$ have no common zeros.   Let $ k_n = k_n(\alpha) = \dfrac{(\alpha +1)(\alpha +2)}{n}.$  For each $n \in \mathbb{N},$  each interval $ (0,x_1), (x_1, x_{2}), \dots, (x_{n-1}, x_n) $ contains either a simple zero of  $ L_{n-1}^{(\alpha +3)}(x)$ or  the point $k_{n},$  but not both. 
\end{theorem}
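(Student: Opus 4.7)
The proof will follow the template established in Theorems \ref{Theorem 2.1.} and \ref{Theorem 2.2.}: derive a mixed three-term recurrence expressing $x^3 L_{n-1}^{(\alpha+3)}(x)$ as a linear combination of $L_n^{(\alpha)}(x)$ and $L_{n-1}^{(\alpha)}(x)$, with the coefficient of $L_{n-1}^{(\alpha)}(x)$ a linear polynomial vanishing precisely at $x=k_n$, then evaluate at the zeros of $L_n^{(\alpha)}(x)$ and run a sign-change analysis using the classical interlacing of $L_n^{(\alpha)}$ and $L_{n-1}^{(\alpha)}$.

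Concretely, I would start from \refe{(4.1)} with $\alpha$ replaced by $\alpha+1$,
$$x^2 L_{n-1}^{(\alpha+3)}(x) = -n(x+\alpha+2) L_n^{(\alpha+1)}(x) + (\alpha+2)(\alpha+n+1) L_{n-1}^{(\alpha+1)}(x),$$
and substitute $L_n^{(\alpha+1)}(x) = L_n^{(\alpha)}(x) + L_{n-1}^{(\alpha+1)}(x)$ from \refe{(4.2)}. The combined coefficient of $L_{n-1}^{(\alpha+1)}(x)$ simplifies to $-n(x-k_n)$, because $(\alpha+2)(\alpha+n+1) - n(\alpha+2) = (\alpha+1)(\alpha+2) = n k_n$. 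Multiplying through by $x$ and substituting the auxiliary identity
$$x L_{n-1}^{(\alpha+1)}(x) = -n L_n^{(\alpha)}(x) + (\alpha+n) L_{n-1}^{(\alpha)}(x),$$
obtained from \refe{(2.1)} (with the relabeling $n+1\to n$) combined with \refe{(4.2)}, yields the target identity
$$x^3 L_{n-1}^{(\alpha+3)}(x) = A(x) L_n^{(\alpha)}(x) - n(\alpha+n)(x-k_n) L_{n-1}^{(\alpha)}(x),$$
for a polynomial $A(x)$ whose explicit form is immaterial.

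Evaluating at a zero $x_k$ of $L_n^{(\alpha)}(x)$ gives $x_k^3 L_{n-1}^{(\alpha+3)}(x_k) = -n(\alpha+n)(x_k-k_n) L_{n-1}^{(\alpha)}(x_k)$, and forming the product at two consecutive zeros yields
$$L_{n-1}^{(\alpha+3)}(x_k) L_{n-1}^{(\alpha+3)}(x_{k+1}) = \frac{n^2(\alpha+n)^2 (x_k-k_n)(x_{k+1}-k_n)}{x_k^3 x_{k+1}^3} L_{n-1}^{(\alpha)}(x_k) L_{n-1}^{(\alpha)}(x_{k+1}).$$
Since $L_{n-1}^{(\alpha)}(x_k) L_{n-1}^{(\alpha)}(x_{k+1})<0$ by the classical interlacing of zeros of $L_n^{(\alpha)}$ and $L_{n-1}^{(\alpha)}$, the right-hand side is negative exactly when $k_n \notin (x_k,x_{k+1})$ and positive exactly when $k_n \in (x_k,x_{k+1})$. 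So each such interval contains an odd (hence at least one) number of zeros of $L_{n-1}^{(\alpha+3)}(x)$ when $k_n$ is absent, and an even number when $k_n$ is present.

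To upgrade "even" to exactly $0$ in the interval containing $k_n$, I would close with a global count. Markov's monotonicity theorem together with the interlacing $x_1<z_1<x_2<\dots<x_n$ of the zeros of $L_{n-1}^{(\alpha)}$ with those of $L_n^{(\alpha)}$ forces the smallest zero of $L_{n-1}^{(\alpha+3)}(x)$ to lie strictly above $x_1$, so $(0,x_1)$ contains no zero. Combining the explicit sign of $L_{n-1}^{(\alpha+3)}(x_k)$ at the $n$ points $x_1,\dots,x_n$ (read off from the identity) with $L_{n-1}^{(\alpha+3)}(0)>0$ and the known sign as $x\to\infty$ produces exactly $n-1$ sign changes in $(0,\infty)$, matching the $n-1$ simple positive zeros of $L_{n-1}^{(\alpha+3)}(x)$ one-to-one with sign-change intervals. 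Hence each interval $(x_k,x_{k+1})$ that does not contain $k_n$ has exactly one zero of $L_{n-1}^{(\alpha+3)}(x)$, while the interval containing $k_n$ has none, which is precisely the asserted "either/or but not both" dichotomy. The main obstacle in the plan is the algebraic bookkeeping that isolates the clean linear factor $(x-k_n)$ in the coefficient of $L_{n-1}^{(\alpha)}(x)$; once that recurrence is in hand, the sign-and-count argument is a direct adaptation of the one in Theorem \ref{Theorem 2.1.}.
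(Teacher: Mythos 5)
Your proposal is correct and follows the paper's argument almost exactly: the paper likewise combines \refe{(4.1)} (with $\alpha$ replaced by $\alpha+1$) with \refe{(4.2)} to reach the identity $x^2 L_{n-1}^{(\alpha +3)}(x) = -n(x+\alpha+2)L_{n}^{(\alpha)}(x) + \left[(\alpha+1)(\alpha+2)-nx\right]L_{n-1}^{(\alpha+1)}(x)$, evaluates it at consecutive zeros of $L_{n}^{(\alpha)}(x)$, and runs the same sign-and-count analysis with the linear factor vanishing at $k_n$. The only divergence is that the paper stops at this identity and invokes the interlacing of the zeros of $L_{n}^{(\alpha)}(x)$ and $L_{n-1}^{(\alpha+1)}(x)$ from \cite[Th. 2.4]{DrJo}, whereas your additional multiplication by $x$ to trade $L_{n-1}^{(\alpha+1)}(x)$ for $L_{n-1}^{(\alpha)}(x)$ is correct but not needed; it merely lets you rely on the classical consecutive-degree interlacing instead of the shifted-parameter result.
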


\begin{proof}  From  \refe{(4.1)} with $\alpha$ replaced by $\alpha +1,$  we have

\begin{equation}\label{(4.3)}  x^2  L_{n-1}^{(\alpha +3)}(x) = -n  (x +\alpha +2)  L_{n}^{(\alpha +1)}(x)  +(\alpha +2)   (\alpha +n +1)   L_{n-1}^{(\alpha +1)}(x),  \end{equation}

and using \refe{(4.2)} we have, after simplification,  

\begin{equation}\label{(4.4)}   x^2 L_{n-1}^{(\alpha +3)}(x) = -n  (x +\alpha +2)  L_{n}^{(\alpha)}(x)  +  [(\alpha +2)   (\alpha +1) - nx]   L_{n-1}^{(\alpha +1)}(x).  \end{equation} 

We know from \cite[Th. 2.4]{DrJo} that the zeros of  $L_{n}^{(\alpha)}(x)$ and  $ L_{n-1}^{(\alpha +1)}(x)$ are interlacing. Evaluating \refe{(4.4)} at successive positive zeros of $L_{n}^{(\alpha)}(x),$ we have

\begin{equation*}\label{(4.5)}   {x_k}^2  {x_{k+1}}^2  L_{n-1}^{(\alpha +3)}(x_k)  L_{n-1}^{(\alpha +3)}(x_{k+1}) = a(x_{k}) a( x_{k+1})L_{n-1}^{(\alpha +1)}(x_k)  L_{n-1}^{(\alpha +1)}(x_{k+1}), \end{equation*}  

where

\begin{equation*}\label{(4.5)} a(x) = (\alpha +1 )  (\alpha + 2) - nx,    \end{equation*} 

and the result follows using the same analysis as in Theorem 2.1.
\end{proof}

\begin{center}
\includegraphics[scale=0.95]{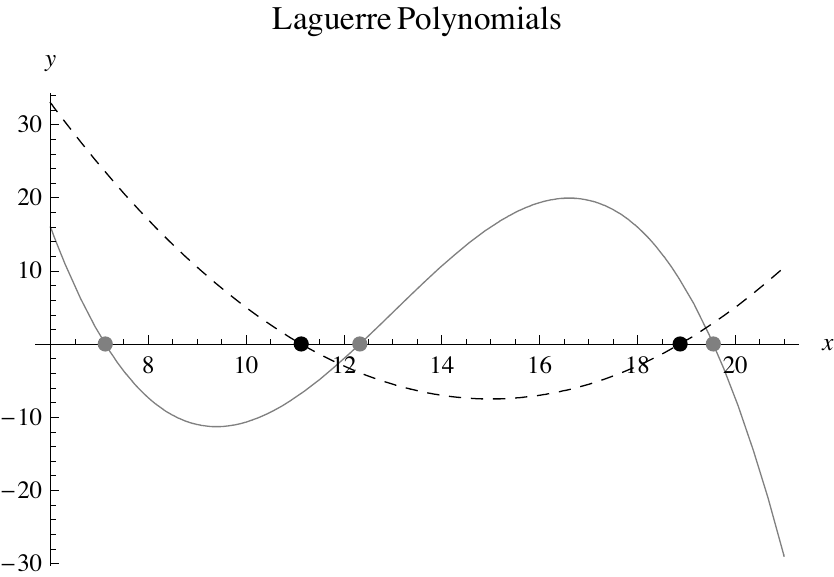}
\captionof{figure}{The zeros of $L_{3}^{10}(x)$ are depicted by gray dots  and those of $L_{2}^{13}(x)$ by black dots. Note that interlacing holds in this example because $\alpha =10$ and $n =3$ so that $ (\alpha +1 )  (\alpha + 2)/ n  = 44,$ which is larger than the largest zero of  $L_n^{(\alpha)}(x).$}
\label{fig_Laguerre_zeros_3-2_10-13} 
\end{center}

\begin{theorem}\label{Theorem 4.2.} Suppose $\{L_{n}^{(\alpha)}(x)\} _{n=0}^\infty $ is  a sequence of Laguerre polynomials with $ \alpha > -1$ and let $\{x_{i}\} _{i=1}^{n} $ be the zeros of  $ L_{n}^{(\alpha)}(x)$  in increasing order.  Assume that $\alpha$ and $n \in \mathbb{N}$ are such that $ L_{n-1}^{(\alpha +4)}(x)$ and $ L_{n}^{(\alpha)}(x)$ have no common zeros. Let $q_{+} = q_{+}{(n,\alpha)}$ be the positive root of the quadratic equation  $n x^2 + 2n (\alpha +2) x - (\alpha +1)(\alpha +2) (\alpha +3) =0$.   For each $n \in \mathbb{N},$  $n \geq 4,$  at least $n-2$ of the $n-1$ intervals $(x_1,x_2), (x_2, x_3) \dots           (x_{n-1},x_n)$ with endpoints at successive zeros of  $L_n^{(\alpha)}(x)$ contain one zero of $ L_{n-1}^{(\alpha +4)}(x).$  If $q_{+} < x_1$ or $q_{+} > x_n,$  the zeros of $ L_{n-1}^{(\alpha +4)}(x)$  interlace with the $n$ zeros of $L_n^{(\alpha)}(x)$.  
\end{theorem}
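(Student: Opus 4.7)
The plan is to imitate the strategy of Theorem~\ref{Theorem 4.1.}: derive a mixed three-term recurrence of the shape
\begin{equation*}
x^{3} L_{n-1}^{(\alpha+4)}(x) = r(x)\, L_{n}^{(\alpha)}(x) - q(x)\, L_{n-1}^{(\alpha+1)}(x),
\end{equation*}
where $r(x)$ is a polynomial in $x$ and $q(x) = n x^{2} + 2n(\alpha+2)x - (\alpha+1)(\alpha+2)(\alpha+3)$ is precisely the quadratic in the theorem, and then read off the sign of $L_{n-1}^{(\alpha+4)}(x_{k})L_{n-1}^{(\alpha+4)}(x_{k+1})$ by evaluating this identity at consecutive zeros of $L_{n}^{(\alpha)}$.

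First, I would apply \refe{(4.1)} with $\alpha$ replaced by $\alpha+2$ to get an identity for $x^{2}L_{n-1}^{(\alpha+4)}(x)$ in terms of $L_{n}^{(\alpha+2)}$ and $L_{n-1}^{(\alpha+2)}$. A double use of \refe{(4.2)}, first with $\alpha\to\alpha+1$ and then as stated, rewrites $L_{n}^{(\alpha+2)}$ as $L_{n}^{(\alpha)} + L_{n-1}^{(\alpha+1)} + L_{n-1}^{(\alpha+2)}$; the $(\alpha+3)(\alpha+n+2)$ contribution to the coefficient of $L_{n-1}^{(\alpha+2)}$ combines with the $-n(x+\alpha+3)$ term to give the clean form $(\alpha+3)(\alpha+2) - nx$. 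Multiplying through by $x$ and invoking the auxiliary identity
\begin{equation*}
x L_{n-1}^{(\alpha+2)}(x) = (\alpha+1) L_{n-1}^{(\alpha+1)}(x) - n L_{n}^{(\alpha)}(x),
\end{equation*}
which follows from the $m=2$ case of \cite[Lem.~2.1]{DrJo} together with \refe{(4.2)}, eliminates $L_{n-1}^{(\alpha+2)}$; the remaining cross-terms combine so that the coefficient of $L_{n-1}^{(\alpha+1)}$ is exactly $-q(x)$.

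Second, I would evaluate the identity at consecutive positive zeros $x_{k}$, $x_{k+1}$ of $L_{n}^{(\alpha)}$, so that the $r(x)L_{n}^{(\alpha)}$ term vanishes, and multiply the two equalities to obtain
\begin{equation*}
(x_{k} x_{k+1})^{3}\, L_{n-1}^{(\alpha+4)}(x_{k})\, L_{n-1}^{(\alpha+4)}(x_{k+1}) = q(x_{k})\, q(x_{k+1})\, L_{n-1}^{(\alpha+1)}(x_{k})\, L_{n-1}^{(\alpha+1)}(x_{k+1}).
\end{equation*}
By the classical interlacing of the zeros of $L_{n}^{(\alpha)}$ and $L_{n-1}^{(\alpha+1)}$ (see \cite[Th.~2.4]{DrJo}), the product $L_{n-1}^{(\alpha+1)}(x_{k}) L_{n-1}^{(\alpha+1)}(x_{k+1})$ is negative for every $k$, so the sign of $L_{n-1}^{(\alpha+4)}(x_{k}) L_{n-1}^{(\alpha+4)}(x_{k+1})$ is opposite to that of $q(x_{k}) q(x_{k+1})$.

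Third, I would analyse the quadratic $q(x)$. Its leading coefficient is $n>0$ and $q(0) = -(\alpha+1)(\alpha+2)(\alpha+3)<0$, so the product of its roots is $-(\alpha+1)(\alpha+2)(\alpha+3)/n<0$; hence $q$ has a unique negative root and a unique positive root $q_{+}$, and $q$ changes sign on the positive real line exactly once, at $q_{+}$. Consequently $q_{+}$ belongs to at most one of the intervals $(x_{1},x_{2}),\dots,(x_{n-1},x_{n})$, and in every other such interval $q(x_{k}) q(x_{k+1}) > 0$, forcing a sign change (and therefore at least one zero) of $L_{n-1}^{(\alpha+4)}$ there. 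This produces at least $n-2$ of the $n-1$ intervals each containing a zero of $L_{n-1}^{(\alpha+4)}$. If moreover $q_{+} < x_{1}$ or $q_{+} > x_{n}$, then $q(x_{k}) q(x_{k+1}) > 0$ for every $k=1,\dots,n-1$, each of the $n-1$ intervals $(x_{k},x_{k+1})$ contains a sign change, and since $L_{n-1}^{(\alpha+4)}$ has only $n-1$ zeros each interval contains exactly one, giving full interlacing with the zeros of $L_{n}^{(\alpha)}$. The hard part is the first step: ensuring all the cross-contributions cancel so that the coefficient of $L_{n-1}^{(\alpha+1)}$ in the derived recurrence is precisely $-q(x)$ with the form specified in the theorem. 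The algebra uses only \refe{(4.1)}, \refe{(4.2)} and one standard Laguerre identity, but the bookkeeping is error-prone.
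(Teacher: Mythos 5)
Your proposal is correct, and the pivotal identity it rests on does check out: with $q(x)=nx^{2}+2n(\alpha+2)x-(\alpha+1)(\alpha+2)(\alpha+3)$ one indeed obtains
\begin{equation*}
x^{3}L_{n-1}^{(\alpha+4)}(x)=\bigl[-nx(x+\alpha+3)-n(\alpha+2)(\alpha+3)+n^{2}x\bigr]L_{n}^{(\alpha)}(x)-q(x)L_{n-1}^{(\alpha+1)}(x),
\end{equation*}
so the coefficient of $L_{n-1}^{(\alpha+1)}$ is exactly $-q(x)$ as you claim. The overall strategy coincides with the paper's (a mixed recurrence whose $L_{n}^{(\alpha)}$ term vanishes at the $x_{k}$, then a sign count governed by a quadratic), but the decomposition and the key lemma differ. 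The paper multiplies its version of your first identity by $(x+\alpha+1)$ and eliminates $L_{n}^{(\alpha+1)}$, reaching $x^{2}(x+\alpha+1)L_{n-1}^{(\alpha+4)}(x)=q(x)L_{n}^{(\alpha+2)}(x)+b_{n,\alpha}(\alpha+n+1)L_{n}^{(\alpha)}(x)$ with the same quadratic up to sign, and then uses the interlacing of the \emph{equal-degree} polynomials $L_{n}^{(\alpha)}$ and $L_{n}^{(\alpha+2)}$ to get the negative companion product; you multiply by $x$, eliminate $L_{n-1}^{(\alpha+2)}$ via the auxiliary relation $xL_{n-1}^{(\alpha+2)}(x)=(\alpha+1)L_{n-1}^{(\alpha+1)}(x)-nL_{n}^{(\alpha)}(x)$ (which is a valid consequence of standard Laguerre contiguity relations), and instead invoke the \emph{consecutive-degree} interlacing of $L_{n}^{(\alpha)}$ and $L_{n-1}^{(\alpha+1)}$ from \cite[Th.~2.4]{DrJo}. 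Both companion products are negative at every pair of consecutive zeros, so the two routes are equally rigorous and give identical conclusions; a minor advantage of yours is that the extraneous factor $x^{3}$ is manifestly positive on $(0,\infty)$, while the paper's $x^{2}(x+\alpha+1)$ needs the (trivial) remark that $x+\alpha+1>0$ there. Your final analysis of $q$ --- positive leading coefficient and negative constant term force a single positive root $q_{+}$, hence at most one exceptional interval, and none when $q_{+}<x_{1}$ or $q_{+}>x_{n}$ --- is the same as the paper's.
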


\begin{proof} From  \refe{(4.3)} with $\alpha$ replaced by $\alpha +1,$  we have

\begin{equation}\label{(4.5)}  x^2  L_{n-1}^{(\alpha +4)}(x) = -n  (x +\alpha +3)  L_{n}^{(\alpha +2)}(x)  +(\alpha +3)   (\alpha +n +2)   L_{n-1}^{(\alpha +2)}(x).  \end{equation}

Also, from \refe{(4.1)} with $\alpha$ replaced by $\alpha +1,$ 

\begin{equation*}\label{(4.6)}   L_{n-1}^{(\alpha +2)}(x) =   L_{n}^{(\alpha +2)}(x)  -    L_{n}^{(\alpha +1)}(x),  \end{equation*} 

and substituting  into \refe{(4.5)} we have  

\begin{equation*}  x^2 L_{n-1}^{(\alpha +4)}(x) = - n  (x +\alpha +3)  L_{n}^{(\alpha +2)}(x)  +  (\alpha +3)   (\alpha +n +2) \left( L_{n}^{(\alpha +2)}(x) -  L_{n}^{(\alpha +1)}(x)\right),   \end{equation*} 
 
which simplifies to

\begin{equation}\label{(4.8)}   x^2 L_{n-1}^{(\alpha +4)}(x) = a (x)  L_{n}^{(\alpha+2)}(x)  +  b_{n,\alpha}  L_{n}^{(\alpha +1)}(x),  \end{equation} 

where 

\begin{equation*}\label{(4.9)} a(x) = (\alpha +3) (\alpha + n +2) - n(x + \alpha +3) , \, \, \,  b_{n,\alpha} = -  (\alpha +3) (\alpha + n +2). \end{equation*}

Multiplying \refe{(4.8)} by $(x + \alpha +1)$ and using \cite[(3.12)]{DrMu1}, gives

\begin{equation*}\label{(4.10)}  x^2 (x + \alpha +1)  L_{n-1}^{(\alpha +4)}(x) =[ (x +\alpha +1) a(x) + x b_{n,\alpha}]   L_{n}^{(\alpha +2)}(x)  +  b_{n,\alpha} (\alpha +n +1)   L_{n}^{(\alpha)}(x),  \end{equation*}

or

\begin{equation*}\label{(4.11)}  x^2 (x + \alpha +1)  L_{n-1}^{(\alpha +4)}(x) = q(x)  L_{n}^{(\alpha +2)}(x)  +  b_{n,\alpha} (\alpha +n +1)   L_{n}^{(\alpha)}(x).  \end{equation*}

A calculation shows that the coefficient of $ L_{n}^{(\alpha +2)}(x) $  is a quadratic function $q(x) = -n r(x),$ where $r(x)=(x-q_{-})(x-q_{+})$, where  $q_{-}<0$ and $q_{+}>0$.

Since  the zeros of  $ L_{n}^{(\alpha )}(x)$ and $ L_{n}^{(\alpha +2)}(x)$ are  positive and  interlacing  (equal degree, parameter difference $2$), we know that  $ L_{n}^{(\alpha +2)}(x_k) L_{n}^{(\alpha +2)}(x_{k+1}) < 0 $  for each $ k \in {\{ 1,2,\dots, n-1}\},$ while $q(x_k)  q(x_{k+1})>0$  except for at most $1$ value of $k$, $ k \in {\{ 1,2,\dots, n-1}\}$.   Therefore, $ L_{n-1}^{(\alpha +4)}(x)$  has a different sign at successive positive zeros $x_k$ and $x_{k+1}$  of  $ L_{n}^{(\alpha)}(x)$  for at least $n-2$ values of  $k \in {\{ 1,2,\dots,n-1}\}$.  Therefore, interlacing of the zeros $ L_{n-1}^{(\alpha +4)}(x)$ with the zeros of $ L_{n}^{(\alpha)}(x)$ fails in a maximum of $1$ interval. The interval in which interlacing fails (if it does) depends on the value of the real positive zero $x_{+}$ of $q(x)$ relative to the largest and smallest zeros of $L_n^{\alpha} (x).$

\end{proof}

\begin{center}
\includegraphics[scale=0.95]{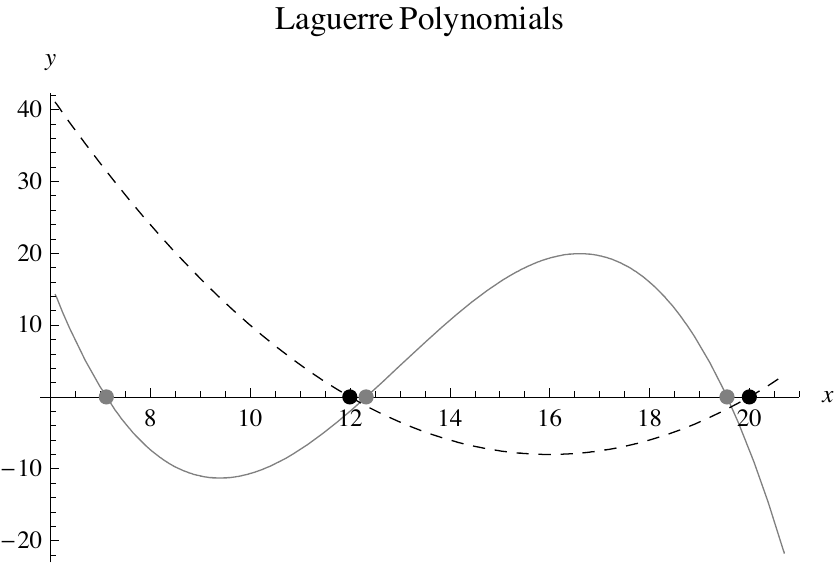}
\captionof{figure}{The roots of $L_{3}^{10}(x)$ are depicted by dots in gray and those of $L_{2}^{14}(x)$ are the black dots. We see that the zeros are not interlacing.
% and this is a consequence of $\alpha$ being significantly larger than $n$.
}
\label{fig_Laguerre_zeros_3-2_10-14}
\end{center}

\begin{table}[!htbp]%[H]
\caption{The zeros of $L_{n}^{(\alpha)}(x)$ and $L_{n-1}^{(\alpha+4)}(x)$, for $n=7$ and $\alpha=-1/2$. The 
positive zero $q_{+}$ of $q(x)$ lies between the smallest and the largest zeros of $L_{n}^{(\alpha)}(x).$ The interval with endpoints at successive zeros of $L_{n}^{(\alpha)}(x)$ where the interlacing of zeros breaks down is highlighted with boxes.

}
\begin{tabular}{|r|r|r|r|r|r|r|r|}
\hline
$L_{n}^{(\alpha)}(x)$ & $x_1$  & $x_2$  & $x_3$  & $x_4$  & $x_5$ & $x_6$  & $x_7$\\
\hline
$L_{n-1}^{(\alpha+4)}(x)$   & $X_1$  & $X_2$  & $X_3$  & $X_4$  & $X_5$ & $X_6$ & --  \\
\hline\hline & & & & & &  &\\
$L_{7}^{(-1/2)}(x)$ & $\fbox{0.0851}$ & $\fbox{0.7721}$ & 2.1806 & 4.3898 & 7.5541 & 11.9900 & 18.5283  \\   
$L_{6}^{(7/2)}(x)$ & 1.5135 & 3.4321 & 6.1108 & 9.7233 & 14.6039 & 21.6165  & -- \\
\hline \end{tabular}
\label{Laguerre-7--1/2}
\end{table}

\begin{table}[!htbp]%[H]
\caption{The zeros of $L_{n}^{(\alpha)}(x)$ and $L_{n-1}^{(\alpha+4)}(x)$ are interlacing when $n=6$ and $\alpha=140.$ The 
positive zero $q_{+}$ of $q(x)$ is larger than the largest zero of $L_{n}^{(\alpha)}(x).$ 
}
\begin{tabular}{|r|r|r|r|r|r|r|}
\hline
$L_{n}^{(\alpha)}(x)$ & $x_1$  & $x_2$  & $x_3$  & $x_4$  & $x_5$ & $x_6$  \\
\hline
$L_{n-1}^{(\alpha+4)}(x)$   & $X_1$  & $X_2$  & $X_3$  & $X_4$  & $X_5$  & --  \\
\hline\hline & & & & & &  \\
$L_{6}^{(140)}(x)$ & 107.898 & 122.754 & 137.03 & 151.886 & 168.291 & 188.141 \\   
$L_{5}^{(144)}(x)$ & 115.547 & 131.765 & 147.665 & 164.792 & 185.231 & -- \\
\hline \end{tabular}
\label{Laguerre-6-140}
\end{table}

\begin{table}[!htbp]%[H]
\caption{The zeros of $L_{n}^{(\alpha)}(x)$ and $L_{n-1}^{(\alpha+4)}(x)$, for $n=8$ and $\alpha=50.$ The 
positive zero $q_{+}$ of $q(x)$ lies between the smallest and the largest zeros of $L_{n}^{(\alpha)}(x).$  The interval with endpoints at successive zeros of $L_{n}^{(\alpha)}(x)$ where the interlacing of zeros breaks down is highlighted with boxes.
}
\begin{tabular}{|r|r|r|r|r|r|r|r|r|}
\hline
$L_{n}^{(\alpha)}(x)$ &  $x_1$  & $x_2$  & $x_3$  & $x_4$  & $x_5$  & $x_6$ & $x_7$ & $x_8$ \\
\hline
$L_{n-1}^{(\alpha+4)}(x)$   & $X_1$  & $X_2$  & $X_3$  & $X_4$  & $X_5$ & $X_6$ & $X_7$ & --  \\
\hline\hline & & & & & & & & \\
$L_{8}^{(50)}(x)$ & 29.962 & 37.123 & 44.225 & 51.695 & 59.809 & 68.883 & $\fbox{79.455}$ & $\fbox{92.849}$  \\   
$L_{7}^{(54)}(x)$ & 34.500 & 42.514 & 50.503 &
   58.991 &  68.373 & 79.223 & 92.896  & -- \\
\hline \end{tabular}
\label{Laguerre-8-50}
\end{table}

\section*{Acknowledgement} Jorge Arves\'u and Kathy Driver wish to thank the Mathematics Department at Baylor University for hosting their visits in Fall 2019 which stimulated this research.

\end{document}